\documentclass[oneside]{amsart}

\usepackage{color}
\usepackage{amssymb}
\usepackage[a4paper]{geometry}
\usepackage{dsfont}
\usepackage{mathrsfs}

\usepackage[hidelinks]{hyperref}

\theoremstyle{plain}
\newtheorem{thm}{Theorem}[section] 
\newtheorem{prop}[thm]{Proposition} 
\newtheorem{lem}[thm]{Lemma}

\theoremstyle{remark}

\theoremstyle{definition}

\numberwithin{equation}{section}

\begin{document}

\title[Bourgain--Brezis selection maps in Besov and Triebel--Lizorkin spaces]{Bourgain--Brezis selection maps for Hodge systems in Besov and Triebel--Lizorkin spaces}

\author{Diogo Ars\'enio}
\address{New York University Abu Dhabi \\
Abu Dhabi \\
United Arab Emirates} 
\email{\href{mailto:diogo.arsenio@nyu.edu}{diogo.arsenio@nyu.edu}}

\subjclass[2020]{42B35, 42B15, 46E35, 35F35, 58A10}
\keywords{
	Bourgain--Brezis estimates,
	Hodge systems,
	trace-free Beurling--Ahlfors transform,
	endpoint Sobolev embeddings,
	Triebel--Lizorkin spaces,
	Besov spaces}
\date{September 27, 2026}

\begin{abstract}
	We establish Bourgain--Brezis selection maps for Hodge systems in new ranges of Besov and Triebel--Lizorkin spaces. Our approach combines the trace-free Beurling--Ahlfors transform with a nonlinear duality method. To control concentrations, we use tools developed by Stolyarov in the study of Maz'ya's $\Phi$-inequalities. These methods yield selection maps in Besov spaces $\dot B^{n/p}_{p,q}(\mathbb{R}^n)$ for every $1\leq q\leq p\leq 2$, and in Triebel--Lizorkin spaces $\dot F^{n/p}_{p,q}(\mathbb{R}^n)$ for every $1<p\leq q\leq 2$. The theorems hold for every dimension $n\geq 2$ and every form degree $1\leq l\leq n-1$. Although our results overlap with some earlier work, they considerably extend the known range of solutions to the Bourgain--Brezis problem and underscore the importance of the trace-free Beurling--Ahlfors transform in its resolution.
\end{abstract}

\maketitle

\section{Introduction and main results}

Consider the critical Besov space
\begin{equation*}
	\dot B^{\frac np}_{p,q}(\mathbb{R}^n)
\end{equation*}
and the critical Triebel--Lizorkin space
\begin{equation*}
	\dot F^{\frac np}_{p,q}(\mathbb{R}^n),
\end{equation*}
with parameters $1<p,q<\infty$ and $n\geq 1$. The corresponding critical Sobolev space is identified with
\begin{equation*}
	\dot W^{\frac np,p}(\mathbb{R}^n)=\dot F^{\frac np}_{p,2}(\mathbb{R}^n),
\end{equation*}
with equivalent norms.

For a normed space $X$, denote the space of differential $l$-forms with coefficients in $X$ by $X\Lambda^l$, $0\leq l\leq n$, and the exterior derivative on $l$-forms by $d$. The expression $A\lesssim B$ is used to denote an inequality $A\leq CB$ for a constant $C>0$ depending only on fixed parameters, and $A\sim B$ means both $A\lesssim B$ and $B\lesssim A$. All relevant functional spaces and notations are introduced with precision in Section~\ref{sec:preliminaries}.

\medskip

Let $n\geq 2$ and $1\leq l\leq n-1$. The Bourgain--Brezis problem for Hodge systems in Besov spaces asks whether every exact datum $dv$ with a primitive $v$ in $\dot B^{n/p}_{p,q}\Lambda^l$ has another primitive $u$ in $(\dot B^{n/p}_{p,q}\cap L^\infty)\Lambda^l$ such that $du=dv$ and
\begin{equation*}
	\|u\|_{(\dot B^{n/p}_{p,q}\cap L^\infty)\Lambda^l}
	\lesssim
	\|v\|_{\dot B^{n/p}_{p,q}\Lambda^l}.
\end{equation*}
An affirmative answer to this question yields a nonlinear selection map $v\mapsto u$ that is bounded from $\dot B^{n/p}_{p,q}$ into $\dot B^{n/p}_{p,q}\cap L^\infty$. The corresponding problem in Triebel--Lizorkin spaces is obtained by replacing $\dot B^{n/p}_{p,q}$ with $\dot F^{n/p}_{p,q}$.

Equivalently, the Bourgain--Brezis problem asks whether
\begin{equation*}
	d\big[\dot B^{\frac np}_{p,q}\Lambda^l\big]
	=
	d\big[(\dot B^{\frac np}_{p,q}\cap L^\infty)\Lambda^l\big],
	\qquad
	d\big[\dot F^{\frac np}_{p,q}\Lambda^l\big]
	=
	d\big[(\dot F^{\frac np}_{p,q}\cap L^\infty)\Lambda^l\big],
\end{equation*}
with equivalent induced norms. The subtlety of this problem lies in the failure of the critical Sobolev embeddings
\begin{equation*}
	\dot B^{\frac np}_{p,q}(\mathbb{R}^n)\not\subset L^\infty(\mathbb{R}^n),
	\qquad
	\dot F^{\frac np}_{p,q}(\mathbb{R}^n)\not\subset L^\infty(\mathbb{R}^n).
\end{equation*}

By devising a constructive approximation procedure, Bourgain and Brezis established \cite{bourgain:brezis:2003,bourgain:brezis:2007} the existence of a bounded selection map in the Sobolev space $\dot W^{1,n}\Lambda^l$, for every form degree $1\leq l\leq n-1$. Then, Bousquet--Mironescu--Russ \cite{bousquet:mironescu:russ:2013} and later Bousquet--Russ--Wang--Yung \cite{bousquet:russ:wang:yung:2019} refined the original Bourgain--Brezis method to construct a selection map in any Triebel--Lizorkin space $\dot F^{n/p}_{p,q}\Lambda^l$, with the constraint $n<p+l$.

Apart from Maz'ya's solution \cite{maz'ya:2010} in the special case $\dot W^{n/2,2}\Lambda^1$, the range $n\geq p+l$ remained out of reach. Recently, we overcame this barrier by proving \cite{arsenio:2026} the existence of a selection map in Sobolev spaces $\dot W^{n/p,p}\Lambda^l$ for a sequence of parameters $p>1$ converging to $1$, in every dimension and for every form degree. We also established corresponding results in Besov spaces $\dot B^{n/p}_{p,q}\Lambda^l$ and Triebel--Lizorkin spaces $\dot F^{n/p}_{p,q}\Lambda^l$ for specific sequences $p\to 1$ and suitable values of $q$.

These results are obtained by using the dual approach to the Bourgain--Brezis problem, which was previously thought to have limited scope, and by exploiting the intrinsic cancellation properties of the trace-free Beurling--Ahlfors transform
\begin{equation}\label{def:AB_transform}
	S=P-\frac ln\operatorname{Id},
\end{equation}
where $P$ denotes the Hodge projection onto exact differential forms.

Our goal now is to further expand the methods and results from \cite{arsenio:2026}. We focus first on the Besov setting $\dot B^{n/p}_{p,q}\Lambda^l$. Our first result establishes the existence of a Bourgain--Brezis selection in the whole range $1\leq q\leq p\leq 2$.

\begin{thm}[Besov selection maps]\label{thm:Besov_solution}
	Let $n\geq2$, $1\leq l\leq n-1$, and $1\leq q\leq p\leq 2$. Then, for every $v\in\dot B^{n/p}_{p,q}\Lambda^l$, there exists $u\in(\dot B^{n/p}_{p,q}\cap L^\infty)\Lambda^l$ such that $du=dv$ and
	\begin{equation*}
		\|u\|_{(\dot B^{\frac np}_{p,q}\cap L^\infty)\Lambda^l}
		\lesssim
		\|v\|_{\dot B^{\frac np}_{p,q}\Lambda^l}.
	\end{equation*}
	In particular,
	\begin{equation*}
		d\big[\dot B^{\frac np}_{p,q}\Lambda^l\big]
		=
		d\big[(\dot B^{\frac np}_{p,q}\cap L^\infty)\Lambda^l\big]
	\end{equation*}
	with equivalent norms.
\end{thm}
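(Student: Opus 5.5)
The plan is the dual (nonlinear duality) approach from \cite{arsenio:2026}. By a Hahn--Banach/closed range argument, the existence of a bounded selection $v\mapsto u$ with $du=dv$ and $u\in(\dot B^{n/p}_{p,q}\cap L^\infty)\Lambda^l$ is equivalent to an a priori estimate on the dual side. Write $X=\dot B^{n/p}_{p,q}\Lambda^l$. We need the statement that $d[X]=d[(X\cap L^\infty)\Lambda^l]$ with equivalent norms. This reduces to showing that every functional which annihilates closed forms and is controlled on $d[X]$ is also controlled by the dual norm of $X\cap L^\infty$, namely $X^*+L^1$, restricted to exact forms. Concretely, I would aim to prove, for test forms $f$ with $f=Pf$ (or, equivalently, for $f$ in the range of $\delta$ when paired against $dv$), the estimate
\begin{equation*}
	\|f\|_{\dot B^{-n/p}_{p',q'}\Lambda^l}\lesssim \inf\big\{\|g\|_{\dot B^{-n/p}_{p',q'}}+\|h\|_{L^1}\ :\ P(g+h)=f\big\}.
\end{equation*}
Here $(p',q')$ are the dual exponents, with $2\le p'\le q'\le\infty$. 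Since $P$ is bounded on $\dot B^{-n/p}_{p',q'}$, the key estimate is $\|Ph\|_{\dot B^{-n/p}_{p',q'}}\lesssim\|h\|_{L^1}+\|(\operatorname{Id}-P)h\|_{\dots}$. More precisely, the target is the following: for $h\in L^1\Lambda^l$ whose $P$-part is compatible, bound $\|Ph\|$ in the negative Besov norm by $\|h\|_{L^1}$ plus the co-exact part.

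The central identity is $P=S+\frac ln\operatorname{Id}$, with $S$ the trace-free Beurling--Ahlfors transform \eqref{def:AB_transform}. Because $P$ is a projection, $P^2=P$ translates into an algebraic relation for $S$, namely $S^2=(1-\frac{2l}{n})S+\frac ln(1-\frac ln)\operatorname{Id}$. This lets us write $h$ as a quadratic-type expression in $Ph$ and $S$, with the identity coefficient $\frac ln(1-\frac ln)\neq0$ since $1\le l\le n-1$. The nonlinear duality then proceeds as follows. Test $Ph$ against a normalized element $\phi$ of $\dot B^{n/p}_{p,q}$ and decompose $\phi$ by Littlewood--Paley pieces $\Delta_j\phi$. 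For each piece, we need to estimate the pairing $\langle h,\Psi_j\rangle$ with a bounded nonlinear function $\Psi$ of the pieces, e.g.\ $\Psi=F(\phi)$ with $F$ bounded and smooth. The cancellation of $S$ lets us replace $\phi$ by $F(\phi)$ up to terms controlled in $L^\infty$.

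The main obstacle is controlling concentrations. One has to show that for $\phi$ in the unit ball of $\dot B^{n/p}_{p,q}$ there is a bounded $\Psi$ with $\|\Psi\|_{L^\infty}\lesssim1$ such that $\phi-\Psi$, after applying $S$ or $P$, gains enough to be paired with the $\dot B^{-n/p}_{p',q'}$-component. Here I would use Stolyarov's tools for Maz'ya's $\Phi$-inequalities: for $q\le p\le 2$, the quantity $\sum_j\|\Delta_j\phi\|_{L^p}^q2^{jnq/p}$ should dominate a square-function-type sum. This domination uses $q\le p$ to pass from $\ell^q L^p$ to $L^p\ell^p$, and then $p\le2$ to pass to $L^p\ell^2$. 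After that, one applies a truncation $\Psi=\sum_j\theta_j\Delta_j\phi$, with $\theta_j$ cutting off where the partial sums exceed a threshold, so that $\|\Psi\|_{L^\infty}\lesssim1$ while the error is absorbed. Verifying that this error lies in $X$ with norm $\lesssim1$, uniformly, precisely in the range $1\le q\le p\le2$, is the step I expect to be hardest. Once the dual estimate holds, the Banach open-mapping argument yields $u$ with the stated bound, and the equality of exact ranges with equivalent norms follows immediately.
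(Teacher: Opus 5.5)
There is a genuine gap. The step you defer as the ``hardest'' carries the whole theorem, and the mechanism you sketch for it is circular. You split $\langle Ph,\phi\rangle=\langle h,\Psi\rangle+\langle h,P\phi-\Psi\rangle$ with $\Psi$ bounded. The second pairing is controlled by the co-exact part of $h$ only if $P\Psi=P\phi$, or if $\|P(\phi-\Psi)\|_{\dot B^{n/p}_{p,q}}$ is small and you then absorb. Either way, that is itself a primal Bourgain--Brezis selection or approximation statement. So your ``duality'' reduces to the constructive strategy the dual method is meant to bypass.

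Your truncation $\Psi=\sum_j\theta_j\Delta_j\phi$ does not preserve $P\phi$. You also use $q\le p\le 2$ only through $\dot B^{n/p}_{p,q}\subset\dot F^{n/p}_{p,p}\subset\dot F^{n/p}_{p,2}$, which discards the Besov structure: a bounded primitive controlled in $\dot W^{n/p,p}$ need not lie in $\dot B^{n/p}_{p,q}$. Two smaller points. Functionals on $d[X]$ are co-exact ($f=P^\perp f$), so ``$f=Pf$'' and ``range of $\delta$'' are not equivalent; your key estimate has $P$ and $\operatorname{Id}-P$ interchanged for degree $l$. And the quadratic relation for $S$ is correct but never needed.

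The missing idea is to prove the dual a priori estimate directly, testing with a nonlinear function of the dual object itself. Take $\omega\in L^1_0\Lambda^l$ with $\widehat\omega$ compactly supported away from $0$, and set $r=p'$, $t=q'$, so $2\le r\le t<\infty$ when $1<q\le p\le 2$. Then:
\begin{enumerate}
\item Write each component as $\omega_I=\frac nl\big((P\omega)_I-(S\omega)_I\big)$, apply $\Delta_j$, multiply by $\Delta_j\omega_I|\Delta_j\omega_I|^{r-2}$ and integrate.
\item Absorb the $P\omega$ term by Young's inequality, weight by $2^{-jn(r-1)}$, and take the $\ell^{t/r}$ norm using $\ell^1\subset\ell^{t/r}$; this is where $q\le p$ enters.
\item Bound the leftover $\sum_j 2^{-jn(r-1)}\big|\int\Delta_j(S\omega)_I\,\Delta_j\omega_I|\Delta_j\omega_I|^{r-2}dx\big|$ by $\|\omega\|_{L^1}^r$. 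This uses Proposition~\ref{prop:nonlinear_cancellation} with $M=r\ge 2$ (where $p\le 2$ enters) and $K=1$. It applies because each coefficient of $S\omega$ is a combination of $m(D)\omega_J$ with $\int_{\mathbb S^{n-1}}m\,d\sigma=0$.
\end{enumerate}
This yields $\|\omega\|_{\dot B^{-n/p}_{p',q'}}\lesssim\|\omega\|_{L^1}+\|P\omega\|_{\dot B^{-n/p}_{p',q'}}$, which is the dual form of the theorem. The case $q=1$ is handled separately and trivially by $\dot B^{n/p}_{p,1}\subset L^\infty$.

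Stolyarov's techniques enter only in proving that cancellation estimate for non-integer $M$, not in any truncation of a primal test function. When $f$ and $g$ are replaced by point masses at a common center, the zero-mean kernel $A_{j,N}$ is paired with a radial function and the integral vanishes. The errors are controlled by first moments, and a lattice localization plus summation over scales handles general $L^1$ data.
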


The corresponding result in the critical Triebel--Lizorkin setting $\dot F^{n/p}_{p,q}\Lambda^l$ holds in the range $1< p\leq q\leq 2$.

\begin{thm}[Triebel--Lizorkin selection maps]\label{thm:TL_solution}
	Let $n\geq2$, $1\leq l\leq n-1$, and $1< p\leq q\leq 2$. Then, for every $v\in\dot F^{n/p}_{p,q}\Lambda^l$, there exists $u\in(\dot F^{n/p}_{p,q}\cap L^\infty)\Lambda^l$ such that $du=dv$ and
	\begin{equation*}
		\|u\|_{(\dot F^{\frac np}_{p,q}\cap L^\infty)\Lambda^l}
		\lesssim
		\|v\|_{\dot F^{\frac np}_{p,q}\Lambda^l}.
	\end{equation*}
	In particular,
	\begin{equation*}
		d\big[\dot F^{\frac np}_{p,q}\Lambda^l\big]
		=
		d\big[(\dot F^{\frac np}_{p,q}\cap L^\infty)\Lambda^l\big]
	\end{equation*}
	with equivalent norms.
\end{thm}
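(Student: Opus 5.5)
The plan is to follow the nonlinear duality approach of \cite{arsenio:2026}. By a standard Hahn--Banach/closed-range argument (as in Bourgain--Brezis), the selection statement of Theorem~\ref{thm:TL_solution} is equivalent to a dual estimate. Let $X=\dot F^{n/p}_{p,q}$, whose predual-type dual is $X^*=\dot F^{-n/p}_{p',q'}$. It suffices to show that, for every $l$-form $f$ in $X^*\Lambda^l$ with $\delta f=0$ (more precisely, for $f$ in the range of the Hodge projection $P$ onto coexact forms), we have
\begin{equation*}
	\|f\|_{X^*\Lambda^l}\lesssim \|f\|_{(X^*+L^1)\Lambda^l}.
\end{equation*}
Here $(X\cap L^\infty)^*\supset X^*+L^1$ is the relevant dual of the intersection space. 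Once this inequality holds, the map $d:(X\cap L^\infty)\Lambda^l\to d[X\Lambda^l]$ has dual with closed range and a lower bound, so it is onto with norm control. A nonlinear bounded selection then follows by the usual open-mapping construction.

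So the core is an a priori estimate. Write $f=g+h$ with $g\in X^*$ and $h\in L^1$. Since $f$ is coexact, we have $f=(\operatorname{Id}-P)f$, and using the trace-free Beurling--Ahlfors transform $S$ from \eqref{def:AB_transform} we get the identity
\begin{equation*}
	f=\tfrac{n}{n-l}(\operatorname{Id}-P)f=-\tfrac{n}{n-l}\,S h+(\text{bounded operator applied to }g).
\end{equation*}
It therefore remains to show $\|Sh\|_{\dot F^{-n/p}_{p',q'}}\lesssim \|h\|_{L^1}+\|g\|_{X^*}$, exploiting the fact that $Sh=-S g+\text{(coexact part)}$. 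I would bound the high-frequency part of $h$ through the $\dot F^{-n/p}_{p',q'}$ norm of the Littlewood--Paley pieces $\Delta_j Sh$. The low-frequency and far-field parts are controlled trivially by the embedding $L^1\subset \dot B^{-n/p}_{p',\infty}$, since $p'\geq 2\ge$ matters here only through $q'\geq p'$ when passing from Besov to Triebel--Lizorkin.

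The main obstacle is concentration: $L^1$ does not embed into $\dot F^{-n/p}_{p',q'}$ with $q'<\infty$, so one must use cancellation. Here I would invoke Stolyarov's techniques for Maz'ya's $\Phi$-inequalities. The key observation is that $S$ has vanishing trace, so $\operatorname{tr}(Sh)$-type quantities vanish, and $Sh$ restricted to the coexact constraint satisfies a cocancelling condition. I would decompose $h$ dyadically in frequency and apply the Stolyarov-type square-function bound
\begin{equation*}
	\Big\|\Big(\sum_j 2^{-jnq'/p}|\Delta_j Sh|^{q'}\Big)^{1/q'}\Big\|_{L^{p'}}\lesssim \|h\|_{L^1}^{\theta}\,\|Sh\|_{X^*}^{1-\theta}+\cdots,
\end{equation*}
valid in the range $p\le q\le 2$, i.e.\ $q'\le p'$ and $q'\geq 2$. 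In this range the $\ell^{q'}$ sum inside $L^{p'}$ can be interchanged with Minkowski, and the $q'\geq2$ exponent permits the orthogonality/convexity needed in Stolyarov's argument. An absorption argument, applied first to smooth compactly supported $f$ and then passed to the limit, closes the estimate.

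Finally I would verify the interchange step carefully. This is exactly where $p\le q$ enters, and it is the Triebel--Lizorkin analogue of the Besov condition $q\le p$ in Theorem~\ref{thm:Besov_solution}. The other constraint, $q\le2$, ensures $q'\ge 2$ for the square-function convexity.
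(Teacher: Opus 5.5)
\textbf{There is a genuine gap: the core estimate is missing.}

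Your duality reduction is sound and matches the paper. Via Proposition~3.2 of \cite{arsenio:2026}, the paper reduces everything to $\|\omega\|_{\dot F^{-n/p}_{p',q'}\Lambda^l}\lesssim\|\omega\|_{L^1}+\|P\omega\|_{\dot F^{-n/p}_{p',q'}\Lambda^l}$ for regularized $\omega\in L^1_0\Lambda^l$. Since $Ph=-Pg$, this is what your target $\|Sh\|_{X^*}\lesssim\|h\|_{L^1}+\|g\|_{X^*}$ amounts to. A small correction: in the paper's notation $P$ projects onto exact forms, so your $f$ satisfies $Pf=0$ and hence $f=-\frac nl Sf$, not $\frac n{n-l}(\operatorname{Id}-P)f$.

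Your ``Stolyarov-type square-function bound'' is asserted rather than derived. As displayed, it involves only $h$ and $Sh$, so it cannot encode the constraint $Ph=-Pg$. Any bound of that shape depending on $h$ alone would, after Young's inequality, give $\|Sh\|_{\dot F^{-n/p}_{p',q'}}\lesssim\|h\|_{L^1}$, and this is false. For a nonzero component $m(D)$ of $S$, the function $\big(\sum_j2^{-jnq'/p}|\Delta_jm(D)\delta(x)|^{q'}\big)^{1/q'}$ equals $|x|^{-n/p'}$ times a nonzero factor invariant under $x\mapsto 2x$. Its $L^{p'}$ norm therefore diverges, and testing on $L^1$-normalized approximations of $\delta$ breaks the bound. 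The spherical cancellation \eqref{eq:cancellation} gives no linear gain. Your ``trivial'' control of low frequencies also fails as stated, since $\dot B^{-n/p}_{p',\infty}\not\subset\dot F^{-n/p}_{p',q'}$.

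The missing idea is the nonlinear duality of Proposition~\ref{prop:nonlinear_cancellation}. The paper sets $u=\omega_I$ and $v=(P\omega)_I$, and writes $\Delta_ju=\frac nl(\Delta_jv-\Delta_j(S\omega)_I)$. It tests this identity against $2^{jts_r}\Delta_ju|\Delta_ju|^{t-2}\big(\sum_{i\le j}2^{its_r}|\Delta_iu|^t\big)^{r/t-1}$, with $r=p'$ and $t=q'$. Then:
\begin{itemize}
\item Lemma~\ref{lem:technical} bounds $\|u\|^r_{\dot F^{s_r}_{r,t}}$ by the resulting sum.
\item The $v$-term is handled by H\"older and Young.
\item The $S\omega$-term is bounded by $\|\omega\|_{L^1}^r$ via \eqref{estimate:nonlinear} with $M=q'$ and $K=p'/q'$.
\end{itemize}

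That last estimate is multilinear in one and the same $L^1$ datum. If $f$ and $g$ concentrate at a point $\mu$, the integrand becomes $A_{0,N}(x-\mu)$ times a radial function, which integrates to zero by \eqref{eq:cancellation}. The errors are controlled by first moments, localized on dyadic lattices, summed over scales, and closed by the rescaling $(\lambda^{-L}f,\lambda g)$.

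The hypotheses therefore enter differently from what you describe, and no Minkowski interchange or orthogonality argument is involved:
\begin{itemize}
\item $q\le 2$ means $M\ge 2$. This makes $a\mapsto a|a|^{M-2}$ of class $C^1$ and gives \eqref{function:estimate}.
\item $p\le q$ means $K\ge 1$. The weight $(\sum_{i\le j}\cdots)^{K-1}$ then has a nonnegative exponent, which is needed both in the H\"older step for the $v$-term and in the proof of \eqref{estimate:nonlinear}.
\end{itemize}
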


Following the methods of \cite{arsenio:2026}, we obtain both theorems as direct consequences of the nonlinear cancellation estimate in the next proposition and of the properties of the trace-free Beurling--Ahlfors transform. The proofs are given in Section~\ref{sec:proofs_main_thm}.

The projectors $\Delta_j$ denote the dyadic operators of a smooth homogeneous Littlewood--Paley decomposition in $\mathbb{R}^n$. They are introduced in detail in Section~\ref{sec:preliminaries}.

\begin{prop}[Nonlinear cancellation]\label{prop:nonlinear_cancellation}
	Let $m\in C^\infty(\mathbb R^n\setminus\{0\})$ be positively homogeneous of degree zero. Assume that
	\begin{equation}\label{eq:cancellation}
		\int_{\mathbb{S}^{n-1}}m(\sigma)d\sigma=0.
	\end{equation}
	Let $M\geq 2$ and $K\geq 1$, and set
	\begin{equation*}
		L=MK-1.
	\end{equation*}
	Then, for all $f,g\in L^1(\mathbb R^n)$,
	\begin{equation}\label{estimate:nonlinear}
		\sum_{j\in\mathbb Z}2^{-j\frac{nL}K}
		\bigg|
		\int_{\mathbb R^n}
		\big(\Delta_jm(D)f\big)(\Delta_jg)|\Delta_jg|^{M-2}
		\bigg(\sum_{i\leq j}2^{-i\frac{nL}K}|\Delta_ig|^M\bigg)^{K-1}
		dx
		\bigg|
		\lesssim
		\|f\|_{L^1}\|g\|_{L^1}^L.
	\end{equation}
\end{prop}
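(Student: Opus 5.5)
The plan is to reduce the estimate to a pointwise bound through duality in $f$, and then to exploit \eqref{eq:cancellation} through a summation by parts in $j$. Write $T_j=\Delta_jm(D)$, $\widetilde T_j$ for its adjoint (symbol $\overline{m(-\xi)}\varphi(2^{-j}\xi)$, which again has zero spherical mean), and set
\begin{equation*}
	G_j=\sum_{i\leq j}2^{-i\frac{nL}K}|\Delta_ig|^M,
	\qquad
	H_j=2^{-j\frac{nL}K}(\Delta_jg)|\Delta_jg|^{M-2}G_j^{K-1}.
\end{equation*}
The left-hand side of \eqref{estimate:nonlinear} is $\sum_j|\int f\,\widetilde T_jH_j\,dx|$. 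As a function of $f$ this is convex and $1$-homogeneous. By density and the Krein--Milman-type reduction (an $L^1$ function is an average of translated Dirac masses), it suffices to take $f=\delta_y$. We can then absorb the signs $\epsilon_j=\pm1$. It therefore suffices to prove
\begin{equation*}
	\sup_{y\in\mathbb R^n}\ \sup_{|\epsilon_j|\leq1}\Big|\sum_j\epsilon_j\,\widetilde T_jH_j(y)\Big|\lesssim\|g\|_{L^1}^L,
\end{equation*}
or, more simply, $\sum_j|\widetilde T_jH_j(y)|\lesssim\|g\|_{L^1}^L$ uniformly in $y$.

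The bookkeeping step is scaling. Since $\|\Delta_ig\|_{L^\infty}\lesssim2^{in}\|g\|_{L^1}$ and $L/K=M-1/K$, the sum defining $G_j$ is geometric and $G_j\lesssim2^{jn/K}\|g\|_{L^1}^M$. This gives $\|H_j\|_{L^\infty}\lesssim\|g\|_{L^1}^L$ with exponent exactly $0$ in $2^j$. Each single term is therefore bounded, but the sum over $j$ is not obviously controlled; the whole difficulty lies in summability. Two mechanisms will supply it.

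The first mechanism is a telescoping identity in the spirit of Stolyarov's treatment of $\Phi$-inequalities. The identity $G_j-G_{j-1}=2^{-jnL/K}|\Delta_jg|^M$ gives $G_j^K-G_{j-1}^K\sim K\,2^{-jnL/K}|\Delta_jg|^MG_j^{K-1}$, and hence
\begin{equation*}
	\sum_j\int2^{-j\frac{nL}K}|\Delta_jg|^MG_j^{K-1}\,dx\lesssim\sup_j\|G_j\|_{L^K}^K .
\end{equation*}
This is a square-function-type (rather than pointwise) control, and it is the quantity that replaces the missing decay in $j$. The second mechanism is the cancellation \eqref{eq:cancellation}. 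It says exactly that the kernel of $m(D)$ is a pure principal-value kernel $\Omega(x)/|x|^n$ with $\int_{\mathbb S^{n-1}}\Omega=0$, with no Dirac component. Consequently, for any family of coefficients $a_j(x)$ that is slowly varying in $j$ and in $x$ at scale $2^{-j}$, the sum $\sum_ja_j\widetilde T_j$ acting near $y$ behaves like a Calder\'on--Zygmund operator without a local term. The contributions of scales where $H_j$ is ``locally constant'' then cancel across $j$ instead of accumulating.

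Concretely, I would split $\widetilde T_jH_j(y)$ into two parts. The main part freezes the slowly varying factor $G_j^{K-1}$ at its coarser-scale value, via the Littlewood--Paley frequency localization of $\Delta_ig$ for $i\leq j$. After summation by parts in $j$ using the telescoping identity, this part is controlled by the zero-mean kernel. The commutator parts are where $\widetilde T_j$ hits the variation of $G_j^{K-1}$ or of $|\Delta_jg|^{M-2}$. For these I would use the kernel decay $|\widetilde T_j(x)|\lesssim2^{jn}(1+2^j|x|)^{-N}$ together with Bernstein's inequality $\|\nabla\Delta_ig\|_\infty\lesssim2^{i(n+1)}\|g\|_1$, which gains a factor $2^{-(j-i)}$ summable in $i\leq j$. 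The main obstacle is the first part: making the ``freeze and sum by parts'' rigorous pointwise at $y$ when $M$ is not an even integer, so that $(\Delta_jg)|\Delta_jg|^{M-2}$ is not frequency-localized. If the pointwise approach fails, I would instead keep $f$ general and pair against it, using the $L^K$ telescoping bound together with $\|\Delta_jg\|_{L^1}\leq\|g\|_{L^1}$ and H\"older in $x$ to close the estimate.
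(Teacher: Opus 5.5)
Your reduction to the pointwise bound $\sup_y\sum_j|\widetilde T_jH_j(y)|\lesssim\|g\|_{L^1}^L$ is fine; testing against approximate identities shows it is equivalent to \eqref{estimate:nonlinear}. Your scaling count $\|H_j\|_{L^\infty}\lesssim\|g\|_{L^1}^L$ is also correct. The gap is that nothing in the proposal produces summability in $j$.

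\emph{The first mechanism.} It bounds things by $\sup_j\|G_j\|_{L^K}^K=\int\big(\sum_{i\in\mathbb Z}2^{-i\frac{nL}K}|\Delta_ig|^M\big)^K dx=\|g\|_{\dot F^{s_r}_{r,M}}^r$ with $r=MK$. This is not bounded by $\|g\|_{L^1}^{MK}$. It is infinite whenever $\int g\neq0$, since the low frequencies contribute a fixed amount on every dyadic annulus at infinity. In fact, by Lemma~\ref{lem:technical}, this quantity is comparable to the left-hand side of \eqref{estimate:nonlinear} for $m\equiv1$ and $f=g$, which is exactly a case where the estimate is false. So this mechanism reproduces the divergence that the cancellation must defeat. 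Your fallback (H\"older, $\|\Delta_jg\|_{L^1}\leq\|g\|_{L^1}$, the $L^K$ bound) fails for the same reason: it uses only size information and would apply equally to $m\equiv1$.

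\emph{The second mechanism.} It misidentifies what \eqref{eq:cancellation} provides. Annihilating coefficients that are locally constant at scale $2^{-j}$ is already done by $\Delta_j$ alone, since $\varphi$ vanishes near $0$, so it holds for $m\equiv1$ too. Moreover, $H_j$ contains $\Delta_jg$ and is never locally constant at the scale of $\widetilde T_j$. Cancellation ``across $j$'' is unavailable, because each $j$ carries its own absolute value, or an arbitrary sign $\epsilon_j$ in your own reduction. The commutators with $|\Delta_jg|^{M-2}$, and with the terms $i\approx j$ in $G_j$, get no Bernstein gain, since these vary exactly at scale $2^{-j}$. Finally, for $K=1$, the case behind Theorem~\ref{thm:Besov_solution}, freezing and telescoping are vacuous. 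The plan then contains no argument at all, and you yourself leave this main part unresolved.

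\emph{The missing idea.} Condition \eqref{eq:cancellation} means that the kernel $A_j$ of $\Delta_jm(D)$ has zero mean on every sphere centred at the origin. Hence $\int A_j(x-\mu)\Phi(|x-\mu|)\,dx=0$ for every radial profile $\Phi$. The kernel of $\Delta_j$ (the case $m\equiv1$) is radial and not orthogonal to radial functions; this is precisely what separates \eqref{eq:cancellation} from $m\equiv1$.

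If $g=a_g\delta_\mu$, the entire $g$-dependent factor of the integrand is radial about $\mu$, because all Littlewood--Paley kernels are radial. The $j$-th integral therefore vanishes for $f=a_f\delta_\mu$; in your setting, take $f=\delta_y$ and $\mu=y$. For general data, a first-order Taylor expansion bounds the normalized $j$-th term by $2^{j}\|F\|_{L^1}^{L}\int|F(x)||x-\mu|\,dx$, with $F=(f,g)$ (Lemma~\ref{lem:concentration}).

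To sum this over $j$, first truncate the kernels to supports of size $2^{N-j}$ with errors $O(2^{-QN})$ (Lemma~\ref{lem:compact:support}). Then localize the data to cubes $3Q_{j-N,a}$ (Lemma~\ref{lem:concentration_lattice}). The whole sum is then bounded by $2^{(1-\alpha)N}\|F\|_{L^1}^{L-1}\sum_j2^j\iint_{\{2^j|x-y|\lesssim1\}}|F(x)||F(y)||x-y|\,dx\,dy\lesssim2^{(1-\alpha)N}\|F\|_{L^1}^{L+1}$. Here $\sum_j2^j|x-y|\mathds 1_{\{2^j|x-y|\lesssim1\}}\lesssim1$ is the real source of summability in $j$. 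The rescaling $(f,g)\mapsto(\lambda^{-L}f,\lambda g)$ then converts $(\|f\|_{L^1}+\|g\|_{L^1})^{L+1}$ into $\|f\|_{L^1}\|g\|_{L^1}^L$. Without this argument (orthogonality to radial profiles plus control of concentration), or some other step that genuinely uses \eqref{eq:cancellation}, your proposal does not prove the statement.
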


\noindent When $M$ and $K$ are integers, an equivalent version of the nonlinear cancellation estimate \eqref{estimate:nonlinear} was established in \cite{arsenio:2026} using a multilinear argument. Proposition~\ref{prop:nonlinear_cancellation} naturally extends that estimate and provides the key new input that enables the broader parameter ranges in Theorems~\ref{thm:Besov_solution} and ~\ref{thm:TL_solution}. To prove this extension, we draw on the analytical machinery developed by Stolyarov in \cite{stolyarov:2024} for proving Maz'ya's $\Phi$-inequalities. These inequalities capture subtle cancellations of regularized concentrations in integrands that are generally not integrable. This is precisely the phenomenon needed here.

The case $K=1$ reduces to
\begin{equation*}
	\sum_{j\in\mathbb Z}2^{-jn(M-1)}
	\bigg|
	\int_{\mathbb R^n}
	\big(\Delta_jm(D)f\big)(\Delta_jg)|\Delta_jg|^{M-2}dx
	\bigg|
	\lesssim
	\|f\|_{L^1}\|g\|_{L^1}^{M-1}.
\end{equation*}
This simpler version of the nonlinear cancellation estimate is used in the Besov setting. The full multiscale estimate \eqref{estimate:nonlinear} is needed in the Triebel--Lizorkin setting.

\medskip

The paper is organized as follows. Section~\ref{sec:preliminaries} introduces the notation and basic facts concerning homogeneous Littlewood--Paley spaces, differential forms, and Hodge projections. Section~\ref{sec:proofs_main_thm} contains the proofs of the main theorems. The remaining sections establish the nonlinear cancellation estimate \eqref{estimate:nonlinear}. Section~\ref{sec:compact} gives a technical reduction that lets us assume the kernels in \eqref{estimate:nonlinear} have compact support. Section~\ref{sec:concentrations} establishes the key cancellation mechanism for regularized concentrations, building on the methods of Stolyarov \cite{stolyarov:2024}. Finally, Section~\ref{sec:end_proof} combines these estimates to complete the proof of Proposition~\ref{prop:nonlinear_cancellation}.

\section{Preliminaries}\label{sec:preliminaries}

We collect here the notation and some standard facts used throughout this article.

\subsection{Homogeneous functional spaces}

The Schwartz space and the space of tempered distributions are respectively denoted by $\mathscr S(\mathbb R^n)$ and $\mathscr S'(\mathbb R^n)$. The space of polynomials is denoted by $\mathscr P(\mathbb R^n)$.
All homogeneous function spaces are defined here as subspaces of the space of tempered distributions modulo polynomials $\mathscr S'(\mathbb R^n)/\mathscr P(\mathbb R^n)$.

Let $\mathscr S_0(\mathbb R^n)$ denote the subspace of all $\phi\in\mathscr S(\mathbb R^n)$ satisfying
\begin{equation*}
	\int_{\mathbb R^n}x^\alpha\phi(x)dx=0
\end{equation*}
for every multi-index $\alpha$. The space of tempered distributions restricted to $\mathscr{S}_0$ is written $\mathscr{S}_0'(\mathbb{R}^n)$. One can show that $\mathscr{S}'/\mathscr{P}$ and $\mathscr{S}_0'$ are isomorphic.

Fix now a nonnegative radial function $\varphi\in C^\infty_c(\mathbb R^n\setminus\{0\})$ such that
\begin{equation*}
	\sum_{j\in\mathbb Z}\varphi(2^{-j}\xi)=1
	\qquad
	\text{for every $\xi\neq0$},
\end{equation*}
and set
\begin{equation*}
	\Delta_j=\varphi(2^{-j}D)=\mathscr{F}^{-1}\varphi(2^{-j}\xi)\mathscr{F},
\end{equation*}
where we use the convention
\begin{equation*}
	\mathscr{F}f(\xi)=\widehat f(\xi)=\int_{\mathbb{R}^n}e^{-i\xi\cdot x}f(x)dx
\end{equation*}
defining the Fourier transform. The smooth homogeneous Littlewood--Paley decomposition
\begin{equation*}
	u=\sum_{j\in\mathbb{Z}}\Delta_{j}u
\end{equation*}
converges in $\mathscr{S}_0'$.

Let $(s,p,q)\in\mathbb{R}\times[1,\infty]\times[1,\infty]$. The homogeneous Besov space $\dot B^s_{p,q}(\mathbb{R}^n)$ is defined as the subspace of $\mathscr{S}_0'$ endowed with the complete norm
\begin{equation*}
	\left\|u\right\|_{\dot B^s_{p,q}\left(\mathbb{R}^n\right)}=
	\left\|2^{js}\Delta_j u\right\|_{\ell^q L^p}
\end{equation*}
where the $\ell^q$ norm acts on $j\in\mathbb{Z}$. If $p,q\in [1,\infty)$, it can be shown that $\mathscr{S}_0$ is dense in $\dot B^s_{p,q}$.

For $(s,p,q)\in\mathbb{R}\times[1,\infty)\times[1,\infty)$, the homogeneous Triebel--Lizorkin space $\dot F^s_{p,q}(\mathbb{R}^n)$ is defined as the subspace of $\mathscr{S}_0'$ endowed with the complete norm
\begin{equation*}
	\left\|u\right\|_{\dot F^s_{p,q}\left(\mathbb{R}^n\right)}=
	\left\|2^{js}\Delta_j u\right\|_{L^p\ell^q}.
\end{equation*}
The subspace $\mathscr{S}_0$ is dense in $\dot F^s_{p,q}$.

We also recall that, for $p,q\in (1,\infty)$, the standard duality identities are
\begin{equation*}
	\big(\dot B^s_{p,q}\big)'
	=
	\dot B^{-s}_{p',q'},
	\qquad
	\big(\dot F^s_{p,q}\big)'
	=
	\dot F^{-s}_{p',q'},
\end{equation*}
with equivalent norms, where the duality pairing is the extension of the usual integral pairing on $\mathscr{S}_0$.

\subsection{Differential forms and Hodge projections}

For $0\leq l\leq n$, we write $\Lambda^l=\Lambda^l\mathbb{R}^n$ to denote the space of exterior $l$-forms on the Euclidean space $\mathbb{R}^n$. If $X$ is a Banach space, then $X(\mathbb R^n,\Lambda^l)$, abbreviated to $X\Lambda^l$, denotes the space of differential $l$-forms whose coefficients belong to $X$.

The exterior derivative on $l$-forms is denoted by $d$ and its formal $L^2$-adjoint is the codifferential $d^*$. These operators satisfy
\begin{equation*}
	d^2=0,
	\qquad
	(d^*)^2=0,
	\qquad
	dd^*+d^*d=-\Delta,
\end{equation*}
where $\Delta=\sum_{j=1}^n\partial_j^2$ is the classical Laplacian.

The norm induced by $X\Lambda^l$ on $d[X\Lambda^l]$ is given by
\begin{equation*}
	\|dv\|_{d[X\Lambda^l]}
	=
	\inf_{du=dv}\|u\|_{X\Lambda^l}
\end{equation*}
and provides $d[X\Lambda^l]$ with a natural Banach structure.

The Hodge projections are
\begin{equation*}
	P=\frac{dd^*}{|D|^2},
	\qquad
	P^\perp=\frac{d^*d}{|D|^2}.
\end{equation*}
They satisfy
\begin{equation*}
	P^2=P,
	\qquad
	(P^\perp)^2=P^\perp,
	\qquad
	PP^\perp=P^\perp P=0,
	\qquad
	P+P^\perp=\operatorname{Id}.
\end{equation*}

Finally, note that the exterior derivative
\begin{equation*}
	d:\dot B^s_{p,q}\Lambda^l\to \dot B^{s-1}_{p,q}\Lambda^{l+1},
	\qquad
	d:\dot F^s_{p,q}\Lambda^l\to \dot F^{s-1}_{p,q}\Lambda^{l+1},
\end{equation*}
and the codifferential
\begin{equation*}
	d^*:\dot B^s_{p,q}\Lambda^l\to \dot B^{s-1}_{p,q}\Lambda^{l-1},
	\qquad
	d^*:\dot F^s_{p,q}\Lambda^l\to \dot F^{s-1}_{p,q}\Lambda^{l-1},
\end{equation*}
act as bounded operators.

\section{Proofs of main theorems}
\label{sec:proofs_main_thm}

We now show how the main theorems follow from Proposition~\ref{prop:nonlinear_cancellation}, using the approach of \cite{arsenio:2026} based on the trace-free Beurling--Ahlfors transform.

For convenience, introduce the regularity parameter
\begin{equation*}
	s_r=-n\Big(1-\frac1r\Big),
\end{equation*}
for any $1\leq r<\infty$. We denote by $L^1_0(\mathbb{R}^n)$ the mean-zero subspace of $L^1(\mathbb{R}^n)$.

We begin with the proof of existence of a Bourgain--Brezis selection map in Besov spaces.

\begin{proof}[Proof of Theorem~\ref{thm:Besov_solution}]
	Let $2\leq r\leq t<\infty$ and take
	\begin{equation*}
		\omega=\sum_{|I|=l}\omega_I dx_I\in L^1_0\Lambda^l,
	\end{equation*}
	such that $P\omega\in\dot B^{s_r}_{r,t}\Lambda^l$, where the summation runs over all ordered $l$-tuples
	\begin{equation*}
		I=(i_1<i_2<\ldots<i_l),
	\end{equation*}
	with $\{i_1,i_2,\ldots,i_l\}\subset\{1,2,\ldots,n\}$ and
	\begin{equation*}
		dx_I=dx_{i_1}\wedge dx_{i_2}\wedge\ldots\wedge dx_{i_l}.
	\end{equation*}
	By the classical regularization procedure described in \cite{arsenio:2026}, it suffices to assume that the Fourier transform of $\omega$ has compact support disjoint from a neighborhood of the origin.
	
	Fix a component $I$ and set
	\begin{equation}\label{proof:thm:besov:notation}
		u=\omega_I,
		\qquad
		v=(P\omega)_I.
	\end{equation}
	From \eqref{def:AB_transform},
	\begin{equation}\label{proof:thm:besov:decomposition}
		\Delta_ju
		=
		\frac nl\big(
		\Delta_jv-\Delta_j(S\omega)_I
		\big).
	\end{equation}
	Multiplying by $(\Delta_ju)|\Delta_ju|^{r-2}$ and integrating gives
	\begin{equation*}
		\|\Delta_ju\|_{L^r}^r
		\lesssim
		\left|
		\int_{\mathbb R^n}
		\Delta_j(S\omega)_I(\Delta_ju)|\Delta_ju|^{r-2}dx
		\right|
		+
		\|\Delta_jv\|_{L^r}\|\Delta_ju\|_{L^r}^{r-1},
	\end{equation*}
	which, by Young's inequality, leads to
	\begin{equation*}
		\|\Delta_ju\|_{L^r}^r
		\lesssim
		\left|
		\int_{\mathbb R^n}
		\Delta_j(S\omega)_I(\Delta_ju)|\Delta_ju|^{r-2}dx
		\right|
		+
		\|\Delta_jv\|_{L^r}^r.
	\end{equation*}
	Next, multiplying by $2^{-jn(r-1)}$ and taking the $\ell^{t/r}$ norm leads to
	\begin{equation*}
		\|u\|_{\dot B_{r,t}^{s_r}}^r
		\lesssim
		\sum_{j\in\mathbb{Z}}2^{-jn(r-1)}
		\left|\int_{\mathbb{R}^n}\Delta_j(S\omega)_I(\Delta_ju)|\Delta_ju|^{r-2}dx\right|
		+\|v\|_{\dot B_{r,t}^{s_r}}^r,
	\end{equation*}
	where we used that $\ell^1\subset\ell^{t/r}$, for $t\geq r$.
	
	By Proposition~4.4 of \cite{arsenio:2026}, each coefficient of $S\omega$ is a finite linear combination of terms $m(D)\omega_J$, where $m$ satisfies the cancellation hypothesis \eqref{eq:cancellation}. Applying Proposition~\ref{prop:nonlinear_cancellation} with $M=r$ and $K=1$ then yields
	\begin{equation*}
		\|u\|_{\dot B^{s_r}_{r,t}}^r
		\lesssim
		\|\omega\|_{L^1_0\Lambda^l}^r
		+
		\|v\|_{\dot B^{s_r}_{r,t}}^r.
	\end{equation*}
	Summation over the finitely many components of $\omega$ proves that $\omega\in\dot B^{s_r}_{r,t}\Lambda^l$ with
	\begin{equation*}
		\|\omega\|_{\dot B^{s_r}_{r,t}\Lambda^l}
		\lesssim
		\|\omega\|_{L^1_0\Lambda^l}
		+
		\|P\omega\|_{\dot B^{s_r}_{r,t}\Lambda^l}.
	\end{equation*}
	This estimate is dual to the Bourgain--Brezis problem in $\dot B^{n/p}_{p,q}\Lambda^l$.
	
	If $1<q\leq p\leq 2$, the preceding estimate with $r=p'$ and $t=q'$ together with Proposition~3.2 of \cite{arsenio:2026} gives the result. If $q=1$, the conclusion follows directly from the critical embedding
	\begin{equation*}
		\dot B^{\frac np}_{p,1}\subset L^\infty
	\end{equation*}
	by taking $u=v$.
\end{proof}

We next prove the existence of a selection map in Triebel--Lizorkin spaces. We begin with a simple technical lemma.

\begin{lem}\label{lem:technical}
	Let $\beta>1$. There exists $C_\beta>0$ such that
	\begin{equation*}
		\Big(\sum_{j\in\mathbb{Z}}a_j\Big)^\beta
		\leq C_\beta \sum_{j\in\mathbb{Z}}a_j\Big(\sum_{i\leq j}a_i\Big)^{\beta-1}
	\end{equation*}
	for every sequence of nonnegative numbers $(a_j)\in\ell^1(\mathbb{Z})$.
\end{lem}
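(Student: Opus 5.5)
We compare the sum with an integral, using the partial sums $A_j=\sum_{i\leq j}a_i$. Since $(a_j)\in\ell^1$ and $a_j\geq 0$, the sequence $A_j$ is nondecreasing. It satisfies $A_j\to 0$ as $j\to-\infty$ and $A_j\to A:=\sum_j a_j$ as $j\to+\infty$. We may assume $A>0$. The left-hand side is $A^\beta$, and it telescopes:
\begin{equation*}
	A^\beta=\sum_{j\in\mathbb Z}\big(A_j^\beta-A_{j-1}^\beta\big).
\end{equation*}
The telescoping is legitimate because $A_{j}^\beta\to0$ as $j\to-\infty$ and $A_j^\beta\to A^\beta$ as $j\to+\infty$.

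The key step is an elementary pointwise bound on each increment. Since $t\mapsto t^\beta$ is convex and increasing for $\beta>1$, the mean value theorem gives, for $0\leq A_{j-1}\leq A_j$,
\begin{equation*}
	A_j^\beta-A_{j-1}^\beta
	\leq \beta A_j^{\beta-1}(A_j-A_{j-1})
	=\beta\, a_j A_j^{\beta-1}.
\end{equation*}
Here the derivative $\beta t^{\beta-1}$ is nondecreasing, so it is bounded by its value at the right endpoint $A_j$. Summing over $j$ yields the claim with $C_\beta=\beta$:
\begin{equation*}
	\Big(\sum_{j}a_j\Big)^\beta\leq \beta\sum_j a_j\Big(\sum_{i\leq j}a_i\Big)^{\beta-1}.
\end{equation*}

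There is no real obstacle. The only points needing care are the convergence of the telescoping series at both ends, which follows from summability, and the use of the right endpoint $A_j$ rather than $A_{j-1}$. The latter is exactly why the inner sum in the statement runs over $i\leq j$, including $i=j$.
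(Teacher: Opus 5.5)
Your proof is correct, and it takes a different route from the paper. You telescope $A^\beta=\sum_j\big(A_j^\beta-A_{j-1}^\beta\big)$ and bound each increment by the mean value theorem at the right endpoint. This is a discrete version of $A^\beta=\int_0^A\beta t^{\beta-1}\,dt$.

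The paper instead fixes an integer $k\geq\beta$ and expands $\big(\sum_j a_j\big)^k$ as a $k$-fold sum. It bounds $1$ by $\sum_{m=1}^k\mathds{1}_{\{j_m=\max\}}$ and uses symmetry to single out the largest index. This gives $\big(\sum_j a_j\big)^k\leq k\sum_j a_j\big(\sum_{i\leq j}a_i\big)^{k-1}$. It then passes from $k$ to $\beta$ by bounding $\big(\sum_{i\leq j}a_i\big)^{k-\beta}\leq\big(\sum_i a_i\big)^{k-\beta}$ and dividing through.

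Your argument handles real $\beta$ directly, with no integer step and no division. It also yields $C_\beta=\beta$, which is sharp: take $N$ equal small terms and let $N\to\infty$, so the ratio of the two sides tends to $\beta$. The paper's argument only gives $C_\beta=\lceil\beta\rceil$. Its merit is that it is purely combinatorial and shows where the ordering $i\leq j$ comes from, namely the largest index in a product of $k$ factors. For the application in Theorem~\ref{thm:TL_solution}, either constant suffices.
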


\begin{proof}
	Choose an integer $k\geq \beta$. By symmetry
	\begin{equation*}
		\Big(\sum_{j\in\mathbb{Z}}a_j\Big)^k
		\leq k \sum_{j_1,\ldots,j_k}a_{j_1}a_{j_2}\cdots a_{j_k}\mathds{1}_{\{j_1=\max\{j_1,j_2,\ldots,j_k\}\}}
		\leq k \sum_{j\in\mathbb{Z}}a_j\Big(\sum_{i\leq j}a_i\Big)^{k-1}.
	\end{equation*}
	Using that
	\begin{equation*}
		\Big(\sum_{i\leq j}a_i\Big)^{k-1}\leq \Big(\sum_{i\leq j}a_i\Big)^{\beta-1}\Big(\sum_{i\in\mathbb{Z}}a_i\Big)^{k-\beta}
	\end{equation*}
	concludes the proof.
\end{proof}

\begin{proof}[Proof of Theorem~\ref{thm:TL_solution}]
	Let $2\leq t\leq r<\infty$. As before, take $\omega\in L^1_0\Lambda^l$ such that $\widehat \omega$ has compact support disjoint from a neighborhood of the origin.
	
	Using the notation introduced in \eqref{proof:thm:besov:notation}, multiply \eqref{proof:thm:besov:decomposition} by
	\begin{equation*}
		2^{jts_r}(\Delta_ju)|\Delta_ju|^{t-2}
		\bigg(\sum_{i\leq j}2^{its_r}|\Delta_iu|^t\bigg)^{\frac rt-1}
	\end{equation*}
	to deduce, by Lemma~\ref{lem:technical}, upon integrating in $x$ and summing over $j$, that
	\begin{equation*}
		\begin{aligned}
			\|u\|_{\dot F^{s_r}_{r,t}}^r
			&\leq
			\frac nl\bigg|\int_{\mathbb{R}^n}\sum_{j\in\mathbb{Z}}
			2^{jts_r}(\Delta_jv)(\Delta_ju)|\Delta_ju|^{t-2}
			\bigg(\sum_{i\leq j}2^{its_r}|\Delta_iu|^t\bigg)^{\frac rt-1}
			dx\bigg|
			\\
			&\quad+\frac nl \sum_{j\in\mathbb{Z}}
			\bigg|\int_{\mathbb{R}^n}
			2^{jts_r}\Delta_j(S\omega)_I(\Delta_ju)|\Delta_ju|^{t-2}
			\bigg(\sum_{i\leq j}2^{its_r}|\Delta_iu|^t\bigg)^{\frac rt-1}
			dx\bigg|.
		\end{aligned}
	\end{equation*}
	By Proposition~4.4 of \cite{arsenio:2026}, each coefficient of $S\omega$ is a finite linear combination of terms $m(D)\omega_J$, where $m$ satisfies the cancellation property \eqref{eq:cancellation}. Applying Proposition~\ref{prop:nonlinear_cancellation} with $M=t$ and $K=r/t$ therefore shows that the last term is controlled by $\|\omega\|_{L^1_0\Lambda^l}^r$.
	
	As for the first term on the right-hand side of the previous estimate, H\"older's inequality shows that it is bounded above by a constant multiple of
	\begin{equation*}
		\bigg|\int_{\mathbb{R}^n}\bigg(\sum_{j\in\mathbb{Z}}
		2^{jts_r}|\Delta_jv||\Delta_ju|^{t-1}\bigg)
		\bigg(\sum_{i\in\mathbb{Z}}2^{its_r}|\Delta_iu|^t\bigg)^{\frac rt-1}
		dx\bigg|
		\leq \|v\|_{\dot F^{s_r}_{r,t}}\|u\|_{\dot F^{s_r}_{r,t}}^{r-1}.
	\end{equation*}
	Hence, by Young's inequality,
	\begin{equation*}
		\|u\|_{\dot F^{s_r}_{r,t}}^r
		\lesssim
		\|\omega\|_{L^1_0\Lambda^l}^r
		+
		\|v\|_{\dot F^{s_r}_{r,t}}^r.
	\end{equation*}
	Summation over the finitely many components of $\omega$ proves that $\omega\in\dot F^{s_r}_{r,t}\Lambda^l$ with
	\begin{equation*}
		\|\omega\|_{\dot F^{s_r}_{r,t}\Lambda^l}
		\lesssim
		\|\omega\|_{L^1_0\Lambda^l}
		+
		\|P\omega\|_{\dot F^{s_r}_{r,t}\Lambda^l}.
	\end{equation*}
	This estimate is dual to the Bourgain--Brezis problem in $\dot F^{n/p}_{p,q}\Lambda^l$.
	
	If $1<p\leq q\leq 2$, the preceding estimate for $r=p'$ and $t=q'$ and Proposition~3.2 of \cite{arsenio:2026} gives the result.
\end{proof}

\section{Reduction to compactly supported kernels}
\label{sec:compact}

Let $A,B\in\mathscr{S}_0(\mathbb{R}^n)$ be the kernels given by
\begin{equation*}
	\mathscr{F}A(\xi)=\varphi(\xi)m(\xi),
	\qquad
	\mathscr{F}B(\xi)=\varphi(\xi),
\end{equation*}
where $m$ is the Fourier multiplier introduced in the statement of Proposition~\ref{prop:nonlinear_cancellation}. Take a radial truncation $\chi\in C_0^\infty(\mathbb{R}^n)$ such that
\begin{equation*}
	\mathds{1}_{\{|x|\leq\frac 12\}}\leq \chi(x)\leq\mathds{1}_{\{|x|\leq 1\}}
\end{equation*}
and define the scaled truncated versions
\begin{equation*}
	\begin{gathered}
		A_j(x)=2^{jn}A(2^jx),
		\qquad
		A_{j,0}(x)=A_j(x)\chi(2^jx),
		\\
		A_{j,N}(x)=A_j(x)\big(\chi(2^{j-N}x)-\chi(2^{j-(N-1)}x)\big),\qquad \text{if }N\geq 1,
	\end{gathered}
\end{equation*}
and
\begin{equation*}
	\begin{gathered}
		B_j(x)=2^{jn}B(2^jx),
		\qquad
		B_{j,N}(x)=B_j(x)\chi(2^{j-(N+1)}x),
		\\
		\widetilde B_{j,N}(x)=B_j(x)(1-\chi)(2^{j-(N+1)}x),
	\end{gathered}
\end{equation*}
for every $j\in\mathbb{Z}$ and $N\in\mathbb{N}$.

By homogeneity of $m$,
\begin{equation}\label{decomposition_double}
	\Delta_jm(D)f=A_j*f=\sum_{N\in\mathbb{N}}A_{j,N}*f,
\end{equation}
and
\begin{equation*}
	\Delta_jg=B_j*g=B_{j,N}*g+\widetilde B_{j,N}*g.
\end{equation*}
Also observe that the cancellation condition \eqref{eq:cancellation} implies
\begin{equation}\label{eq:cancellation:2}
	\int_{\mathbb{S}^{n-1}}A_{j,N}(\sigma)d\sigma=0.
\end{equation}

The next lemma reduces the kernels in the nonlinear cancellation estimate \eqref{estimate:nonlinear} to compactly supported kernels.

\begin{lem}[Reduction to compact supports]\label{lem:compact:support}
	Suppose the assumptions of Proposition~\ref{prop:nonlinear_cancellation} are satisfied. If
	\begin{equation}\label{estimate:nonlinear:compact}
		\begin{aligned}
			\sum_{j\in\mathbb Z}2^{-j\frac{nL}K}
			\bigg|
			\int_{\mathbb R^n}
			(A_{j,N}*f)(B_{j,N}*g)|B_{j,N}*g|^{M-2}
			\bigg(\sum_{i\leq j}2^{-i\frac{nL}K}|B_{i,N+i-j}*g|^M\bigg)^{K-1}
			dx
			\bigg|&
			\\
			\lesssim c_N
			\|f\|_{L^1}\|g\|_{L^1}^L&
		\end{aligned}
	\end{equation}
	holds for a sequence $(c_N)\in\ell^1(\mathbb{N})$, then \eqref{estimate:nonlinear} follows.
\end{lem}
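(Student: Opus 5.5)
The plan is to insert the decomposition \eqref{decomposition_double} into the left-hand side of \eqref{estimate:nonlinear}. Then, for each fixed $N$, we replace every occurrence of $\Delta_i g$ by its truncation at the spatial scale of $A_{j,N}$, which is the scale $2^{N-j}$. The choice $B_{i,N+i-j}$ in \eqref{estimate:nonlinear:compact} is exactly the truncation of $B_i$ at radius $\sim 2^{N+1-j}$, independently of $i$. For $i\ll j-N$ the index $N+i-j$ is negative, and $B_{i,N+i-j}$ is then understood by the same formula with $\chi(2^{j-N-1}\cdot)$. By the triangle inequality in $N$,
\begin{equation*}
	\text{LHS of \eqref{estimate:nonlinear}}\leq\sum_{N\in\mathbb N}\big(\mathrm{I}_N+\mathrm{E}_N\big).
\end{equation*}
Here $\mathrm I_N$ is the left-hand side of \eqref{estimate:nonlinear:compact}, so $\sum_N\mathrm I_N\lesssim\|f\|_{L^1}\|g\|_{L^1}^L$ by hypothesis. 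The remaining work is to show $\mathrm E_N\lesssim c'_N\|f\|_{L^1}\|g\|_{L^1}^L$ with $(c'_N)\in\ell^1$.

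To bound $\mathrm E_N$, write the nonlinearity $\Phi(a,b)=a|a|^{M-2}b^{K-1}$ and use the elementary bound
\begin{equation*}
	|\Phi(a,b)-\Phi(a',b')|\lesssim|a-a'|(|a|+|a'|)^{M-2}(b+b')^{K-1}+(|a|+|a'|)^{M-1}|b-b'|\,(b+b')^{K-2},
\end{equation*}
with the appropriate modification when $K<2$, using $|b^{K-1}-b'^{K-1}|\le|b-b'|^{K-1}$. The $a$-differences are $\widetilde B_{j,N}*g$. The $b$-differences are controlled through $\big||x|^M-|y|^M\big|\lesssim|x-y|(|x|+|y|)^{M-1}$ by $\widetilde B_{i,N+i-j}*g$, and for $i\leq j-N$ by the full $|\Delta_i g|^M$. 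The size bounds I would use are the following.
\begin{itemize}
	\item $\|B_i*g\|_{L^\infty}\lesssim 2^{in}\|g\|_{L^1}$.
	\item $\|\widetilde B_{i,N'}\|_{L^1}+2^{-in}\|\widetilde B_{i,N'}\|_{L^\infty}\lesssim 2^{-\kappa N'}$ for $N'\geq0$ and any $\kappa$, since $B$ is Schwartz.
	\item $\|A_{j,N}\|_{L^1}\lesssim 2^{-\kappa N}$ for $N\geq1$, by Schwartz decay of $A$.
	\item The identity $M-L/K=1/K$, which gives
	\begin{equation*}
		\sum_{i\leq i_0}2^{-i\frac{nL}K}\|\Delta_ig\|_{L^\infty}^M\lesssim 2^{i_0\frac nK}\|g\|_{L^1}^M.
	\end{equation*}
\end{itemize}
The last point shows that the part of the inner sum with $i\le j-N$ carries a factor $2^{-Nn/K}$ relative to the full inner sum. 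The part with $j-N<i\le j$ carries $\sum_i 2^{in/K}2^{-\kappa(N+i-j)}\lesssim 2^{jn/K}2^{-Nn/K}$. Collecting the powers of $2^j$ gives
\begin{equation*}
	-\tfrac{nL}{K}+n(M-1)+\tfrac{n(K-1)}K=0,
\end{equation*}
so each error term is scale invariant with a prefactor $2^{-\epsilon N}$. This is what should make $(c'_N)$ summable.

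The main obstacle is the summation over $j$. A scale-invariant bound of the form $\|f\|_{L^1}\|g\|_{L^1}^L$ for each single $j$ does not sum, and kernels without cancellation genuinely produce a $|y-z|^{-n}$ divergence. I would therefore not estimate the error terms with all factors in $L^\infty$. Instead I would write them as
\begin{equation*}
	\int|f|(y)\sum_j\big(|\check A_{j,N}|*h_j\big)(y)\,dy,
\end{equation*}
and place one $L^1$ factor on a copy of $g$ that carries a tail kernel $\widetilde B$. The hope is that the combined kernel
\begin{equation*}
	\sum_j 2^{jn}\big(|A_{j,N}|*|\widetilde B_{j,N}|\big)(w)
\end{equation*}
is uniformly bounded in $w$ after using the supports. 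Indeed, $A_{j,N}$ lives at distance $\sim2^{N-j}$ and $\widetilde B_{j,N}$ beyond $2^{N+1-j}$, so for fixed $w$ only $O(1)$ scales $j$ contribute a non-negligible amount, the rest decaying geometrically. If this support-separation argument fails for some cross terms, my fallback is a different decomposition. I would telescope $B_j=\sum_{N'\geq N}(B_{j,N'+1}-B_{j,N'})+B_{j,N}$ and regroup the error terms so that each is again of the form \eqref{estimate:nonlinear:compact} at a larger truncation level $N'$. Each such term retains the cancellation \eqref{eq:cancellation:2} of $A_{j,N}$ and carries an extra factor $2^{-\kappa(N'-N)}$. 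The hypothesis with $(c_{N'})\in\ell^1$ would then absorb the double sum over $N\leq N'$.
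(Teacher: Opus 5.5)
Your architecture matches the paper's: split off $\sum_N\mathrm{I}_N$ via \eqref{decomposition_double}, put all but one $g$-factor of the error in $L^\infty$, pair a single tail-carrying factor with $|A_{j,N}|*|f|$, and sum in $j$ because $A_{j,N}$ lives in $\{|x|\le 2^{N-j}\}$ while the tails live in $\{|x|\ge 2^{N-j}\}$. However, two of your steps destroy exactly the structure this mechanism needs.

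\textbf{First gap: $1<K<2$.} This range is needed, since $K=r/t$ in Theorem~\ref{thm:TL_solution}. Your modification $|b^{K-1}-b'^{K-1}|\le|b-b'|^{K-1}$ leaves the tail only to the power $K-1<1$. A fractional power of $\widetilde B*|g|$ is not dominated by anything linear in $|g|$, so the reduction to a bounded kernel is lost. Indeed, with the other factors in $L^\infty$ as in your plan, take $f$ an approximate Dirac mass and $g$ of unit mass spread evenly over $L_0$ consecutive dyadic annuli; your majorant then grows like $L_0^{2-K}$. The paper avoids this by taking as second argument the $\ell^M$ norm $\|2^{(j-i)\frac{nL}{MK}}\Delta_ig\|_{\ell^M(i<j)}$ rather than its $M$-th power, with $H(a,b)=a|a|^{M-2}(|a|^M+|b|^M)^{K-1}$. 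This $H$ is homogeneous of degree $L$, with gradient homogeneous of degree $L-1=MK-2\ge 0$. That gives \eqref{function:estimate} for every $K\ge1$. The triangle and Minkowski inequalities in $\ell^M$ then produce exactly one linear factor $C_{j,N}*|g|$, where $C_{j,N}(x)=\|2^{(j-i)\frac{nL}{MK}}\widetilde B_{i,N+i-j}(x)\|_{\ell^M(i\le j)}$.

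\textbf{Second gap: the range $i\le j-N$.} There you control the $b$-difference by the full $|\Delta_ig|^M$, which discards the tail. Take any $f,g$ with nonzero integrals and rescale the scale-$j$ term of that majorant to $j=0$. As $j\to-\infty$, it tends to its value at $f=(\int f)\delta_0$, $g=(\int g)\delta_0$. That value is a fixed positive multiple of $|\int f|\,|\int g|^L$, whereas the true integrand integrates to zero there by \eqref{eq:cancellation:2}. So the sum over $j$ diverges. You must keep $\widetilde B_{i,N+i-j}=B_i\,(1-\chi)(2^{j-N-1}\cdot)$ for every $i\le j$. It is still supported in $\{|x|\ge 2^{N-j}\}$ when $N+i-j<0$, but it is then not small, so your size bound $2^{-\kappa N'}$ does not apply. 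Correspondingly, the bundled profile $C(x)=\big(\sum_{i\le0}2^{i\frac nK}|B(2^ix)|^M\big)^{1/M}$ decays only like $|x|^{-\frac{n}{MK}}$.

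Your kernel check covered only $\widetilde B_{j,N}$, and it needs two corrections. The normalization is $2^{-jn}$, not $2^{jn}$. The supports also touch at $|x|=2^{N-j}$ rather than being separated, so the combined kernel vanishes only at the origin. The paper combines this vanishing with a uniform Lipschitz bound on $C_{0,N}*|A_{0,N}|$ and the decay of $C$ to get
$\sum_j 2^{-jn}C_{j,N}*|A_{j,N}|(x)\lesssim\sum_j\frac{|2^jx|}{1+|2^jx|^{1+\frac{n}{MK}}}\lesssim 1$.
It then gains $2^{-NQ}$ by replacing $A_{j,N}$ with $|2^jx|^QA_{j,N}$.

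Finally, your fallback is not available. Hypothesis \eqref{estimate:nonlinear:compact} only controls $A_{j,N}$ paired with the level-$N$ truncation of the $g$-part. Telescoped pieces pairing $A_{j,N}$ with differences of level-$N'$ and level-$(N'+1)$ truncations are not covered by it.
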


\begin{proof}
	Define $H:\mathbb{R}^2\to\mathbb{R}$ by
	\begin{equation}\label{function:def}
		H(a,b)=a|a|^{M-2}\big(|a|^M+|b|^M\big)^{K-1}
	\end{equation}
	and denote the part of the integrands in \eqref{estimate:nonlinear} and \eqref{estimate:nonlinear:compact} which contains $g$ by
	\begin{equation*}
		\begin{aligned}
			\mathcal{H}_j&=
			H\Big(\Delta_jg,\big\|2^{(j-i)\frac{nL}{MK}}\Delta_ig\big\|_{\ell^M(i<j)}\Big),
			\\
			\mathcal{H}_{j,N}&=
			H\Big(B_{j,N}*g,\big\|2^{(j-i)\frac{nL}{MK}}B_{i,N+i-j}*g\big\|_{\ell^M(i<j)}\Big),
		\end{aligned}
	\end{equation*}
	respectively, where the $\ell^M$ norm acts on the index $i$.
	
	By the mean-value theorem, a direct computation gives
	\begin{equation}\label{function:estimate}
		\big|H(a,b)-H(\widetilde a,\widetilde b)\big|\lesssim
		\big(|a|+|b|+|\widetilde a|+|\widetilde b|\big)^{L-1}\big|(a-\widetilde a,b-\widetilde b)\big|,
	\end{equation}
	for every $(a,b),(\widetilde a,\widetilde b)\in\mathbb{R}^2$.
	This yields
	\begin{equation*}
		\begin{aligned}
			|\mathcal{H}_j-\mathcal{H}_{j,N}|
			&\lesssim
			\Big(\big\|2^{(j-i)\frac{nL}{MK}}\Delta_ig\big\|_{\ell^M(i\leq j)}
			+\big\|2^{(j-i)\frac{nL}{MK}}
			B_{i,N+i-j}*g
			\big\|_{\ell^M(i\leq j)}\Big)^{L-1}
			\\
			&\hspace{70mm}\times
			\big\|2^{(j-i)\frac{nL}{MK}}\widetilde B_{i,N+i-j}*g\big\|_{\ell^M(i\leq j)}
			\\
			&\lesssim
			\Big(\big\|2^{(j-i)\frac{nL}{MK}}B_i
			\big\|_{\ell^M(i\leq j)}*|g|\Big)^{L-1}
			\big(C_{j,N}*|g|\big),
		\end{aligned}
	\end{equation*}
	where
	\begin{equation*}
		C_{j,N}(x)=\big\|2^{(j-i)\frac{nL}{MK}}\widetilde B_{i,N+i-j}(x)\big\|_{\ell^M(i\leq j)}.
	\end{equation*}
	Since
	\begin{equation}\label{technical:1}
		\big\|2^{(j-i)\frac{nL}{MK}}
		B_i
		\big\|_{L^\infty \ell^M(i\leq j)}
		\lesssim 2^{jn}
		\big\|2^{(i-j)\frac{n}{MK}}
		\big\|_{\ell^M(i\leq j)}\|B\|_{L^\infty}\lesssim 2^{jn},
	\end{equation}
	it follows that
	\begin{equation*}
		|\mathcal{H}_j-\mathcal{H}_{j,N}|\lesssim 2^{jn(L-1)}\|g\|_{L^1}^{L-1}\big(C_{j,N}*|g|\big).
	\end{equation*}
	
	Using \eqref{decomposition_double}, this implies that the left-hand side of \eqref{estimate:nonlinear} is bounded above by
	\begin{equation}\label{technical:compact_support}
		\begin{aligned}
				&\sum_{j\in\mathbb Z}\sum_{N\in\mathbb{N}}2^{-jnL}
				\bigg|
				\int_{\mathbb R^n}
				(A_{j,N}*f)\mathcal{H}_jdx
				\bigg|
				\\
				&\lesssim
				\sum_{j\in\mathbb Z}\sum_{N\in\mathbb{N}}
				\left(2^{-jnL}
				\bigg|
				\int_{\mathbb R^n}
				(A_{j,N}*f)\mathcal{H}_{j,N}dx
				\bigg|
				+2^{-jn}\|g\|_{L^1}^{L-1}
				\int_{\mathbb R^n}
				\big(C_{j,N}*|A_{j,N}|*|f|\big)
				|g|dx
				\right).
		\end{aligned}
	\end{equation}
	It remains to bound the last integral. Note that
	\begin{equation*}
		C_{j,N}(x)=2^{jn}C(2^jx)(1-\chi)(2^{j-(N+1)}x),
	\end{equation*}
	with
	\begin{equation*}
		C(x)=\bigg(\sum_{i\leq 0} 2^{i\frac nK}\big|B(2^ix)\big|^M\bigg)^\frac 1M
		\lesssim \big(1+|x|^\frac n{MK}\big)^{-1}.
	\end{equation*}
	Now, the key observations are that $C_{j,N}*|A_{j,N}|(0)=0$, because the two kernels have nonoverlapping supports, and that the slope of
	\begin{equation*}
		2^{-jn}C_{j,N}*|A_{j,N}|(2^{-j}x)=C_{0,N}*|A_{0,N}|(x)
	\end{equation*}
	remains uniformly bounded. Hence,
	\begin{equation}\label{lem:compact:2}
		\sum_{j\in\mathbb{Z}}
		2^{-jn}C_{j,N}*|A_{j,N}|(x)
		\lesssim \sum_{j\in\mathbb{Z}}\frac{|2^jx|}{1+|2^jx|^{1+\frac n{MK}}}\lesssim 1.
	\end{equation}
	We can refine this pointwise estimate using the rapid decay of $A(x)$ and the fact that $2^jx\sim 2^N$ on the support of $A_{j,N}(x)$. Indeed, for any $Q>0$, consider the kernel
	\begin{equation*}
		G_{j,N}(x)=|2^jx|^Q A_{j,N}(x)
	\end{equation*}
	and note that \eqref{lem:compact:2} holds with $G_{j,N}$ in place of $A_{j,N}$. Then,
	\begin{equation*}
		\sum_{j\in\mathbb{Z}}
		2^{-jn}C_{j,N}*|A_{j,N}|(x)
		\lesssim 2^{-NQ}\sum_{j\in\mathbb{Z}}
		2^{-jn}C_{j,N}*|G_{j,N}|(x)
		\lesssim 2^{-NQ}.
	\end{equation*}
	
	Using this pointwise control in the last term of \eqref{technical:compact_support} shows that the left-hand side of \eqref{estimate:nonlinear} is bounded above by
	\begin{equation*}
		\sum_{j\in\mathbb Z}\sum_{N\in\mathbb{N}}
		\left(2^{-jnL}
		\bigg|
		\int_{\mathbb R^n}
		(A_{j,N}*f)\mathcal{H}_{j,N}dx
		\bigg|\right)
		+\|f\|_{L^1}\|g\|_{L^1}^L.
	\end{equation*}
	Combining this bound with \eqref{estimate:nonlinear:compact} concludes the proof.
\end{proof}

\section{Control of concentrations}
\label{sec:concentrations}

This section contains the crux of the argument and draws on methods developed by Stolyarov. The next lemma, analogous to Lemma~3.1 of \cite{stolyarov:2024}, shows that the integral in \eqref{estimate:nonlinear:compact} remains small when the underlying functions concentrate near a single atom. We use the kernel notations from Section~\ref{sec:compact}.

\begin{lem}[Concentration on an atom]\label{lem:concentration}
	Suppose the assumptions of Proposition~\ref{prop:nonlinear_cancellation} are satisfied. Then
	\begin{equation}\label{estimate:single_atom}
		\begin{aligned}
			&\bigg|
			\int_{\mathbb R^n}
			(A_{j,N}*f)(B_{j,N}*g)|B_{j,N}*g|^{M-2}
			\bigg(\sum_{i\leq j}2^{-i\frac{nL}K}|B_{i,N+i-j}*g|^M\bigg)^{K-1}
			dx
			\bigg|
			\\
			&\lesssim 2^{j(1+\frac{nL}K)-\alpha N}
			\|g\|_{L^1}^{L-1}
			\left(\|g\|_{L^1}\int_{\mathbb{R}^n}|f(x)||x-\mu|dx
			+\|f\|_{L^1}\int_{\mathbb{R}^n}|g(x)||x-\mu|dx\right),
		\end{aligned}
	\end{equation}
	for every $\alpha>0$ and $\mu\in\mathbb{R}^n$.
\end{lem}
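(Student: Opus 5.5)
The plan is to compare the actual integral with the one obtained by replacing $f$ and $g$ with point masses at $\mu$ carrying the same total mass, and to show that the latter vanishes by the cancellation \eqref{eq:cancellation:2}. By translation invariance of every operator in \eqref{estimate:single_atom}, it suffices to take $\mu=0$. Set $F=\int f$ and $G=\int g$, and consider the model functions $\bar f=F\delta_0$ and $\bar g=G\delta_0$. The corresponding integrand is
\begin{equation*}
	F\,A_{j,N}(x)\,H\Big(G B_{j,N}(x),\big\|2^{(j-i)\frac{nL}{MK}}G B_{i,N+i-j}(x)\big\|_{\ell^M(i<j)}\Big)
\end{equation*}
(up to the factor $2^{-jnL(K-1)/K}$), with $H$ as in \eqref{function:def}. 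Since $A$ and $B$ are built from radial and homogeneous-of-degree-zero data ($\varphi$ and $\chi$ are radial), $B_{i,\cdot}$ is radial. Hence the $H$-factor is a radial function $R(|x|)$, while $A_{j,N}$ integrates to zero on every sphere by \eqref{eq:cancellation:2} and homogeneity of $m$. Integrating in polar coordinates therefore gives exactly zero for the model integral. This is the mechanism of Stolyarov's Lemma~3.1.

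Next, I write the true integral minus the model integral as a sum of two differences. The first is the replacement of $f$ by $\bar f$. Since $A_{j,N}*(f-F\delta_0)(x)=\int f(y)\big(A_{j,N}(x-y)-A_{j,N}(x)\big)dy$, it is bounded by $\|\nabla A_{j,N}\|$ times $|y|$. The second is the replacement of $g$ by $\bar g$ inside the $H$-factor, using the Lipschitz-type estimate \eqref{function:estimate}. In both cases the first-order Taylor expansion produces the moments $\int|f||x|$ and $\int|g||x|$. The gradients cost a factor $2^j$, and the remaining factors are controlled in $L^\infty$: $|B_{i,\cdot}*g|\lesssim 2^{in}\|g\|_{L^1}$, and the weighted $\ell^M$ norm over $i\le j$ is bounded by $2^{jn}\|g\|_{L^1}$, as in \eqref{technical:1}. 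Similarly, $|A_{j,N}*f|\lesssim 2^{jn}\|f\|_{L^1}$.

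The integration is over the support of $A_{j,N}$, a region of measure $2^{(N-j)n}$. The powers of $2^j$ then combine to $2^{j}\cdot 2^{jnL}\cdot 2^{-jnL(K-1)/K}\cdot 2^{-jn}$, which matches $2^{j(1+nL/K)}$ up to the bookkeeping of the normalization. On this support $|2^jx|\sim 2^N$, and the rapid decay of $A$ and $\nabla A$ there yields $2^{-QN}$ for any $Q$. This absorbs the volume factor $2^{Nn}$ and the polynomial growth coming from $B$'s truncation at scale $2^{N}$, giving $2^{-\alpha N}$ for any $\alpha$.

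The main obstacle I expect is the $g$-difference inside $H$ when the factor $|A_{j,N}*f|$ is not replaced. There one must decide how to order the telescoping: first replace $f$, keeping the true $g$, then replace $g$ while $f=\bar f$. This ordering is what makes $A_{j,N}$'s decay available in the second step. A further delicate point is the $\ell^M$ sum over $i\le j$ with shifted truncation $N+i-j$. I would control its gradient uniformly by the same geometric-series estimate as \eqref{technical:1}, noting that $\nabla B_i$ carries $2^i\le 2^j$.
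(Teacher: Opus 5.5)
Your route is the paper's: compare with the point masses $(\int f)\delta_\mu$ and $(\int g)\delta_\mu$, and note that the model integral vanishes because the $g$-factor built from the radial kernels $B_{i,N+i-j}$ is radial while $A_{j,N}$ has zero spherical means. You then telescope in the same order (first $f$ with the true $g$, then $g$ against $(\int f)A_{j,N}(\cdot-\mu)$) and finish with the rapid decay of $A$. Your treatment of the second difference is fine. There $A_{j,N}$ appears unconvolved and the integrand lives on $\operatorname{supp}A_{j,N}$, so your bound is exactly the paper's remainder $\|A_{0,N}\|_{L^1}\|f\|_{L^1}$ times the $L^\infty$ norm of the $g$-difference.

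The first difference, however, is not correctly estimated as you describe it. The function $A_{j,N}*(f-F\delta_0)$ (with your $F=\int f$) is supported in $\operatorname{supp}A_{j,N}+(\operatorname{supp}f\cup\{0\})$, not in $\operatorname{supp}A_{j,N}$. Since $f$ is an arbitrary integrable function with finite first moment, the following is not legitimate: bound this factor pointwise by $\|\nabla A_{j,N}\|_{L^\infty}\int|f(y)||y|\,dy$, bound the $g$-factor in $L^\infty$, then multiply by the volume $2^{(N-j)n}$. One of the two factors must be placed in $L^1$.

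The paper keeps $L^\infty$ on the $f$-difference and bounds the $g$-factor in $L^1$. It uses $|H(a,b)|=|a|^{M-1}(|a|^M+|b|^M)^{K-1}$ together with Young's inequality in $L^{M-1}$ (this is where $M\geq 2$ enters). This gives, at $j=0$, the bound $\|B\|_{L^{M-1}}^{M-1}\|B\|_{L^\infty}^{M(K-1)}\|g\|_{L^1}^{L}$. Alternatively, integrate in $x$ first for fixed $y$ and keep the $g$-factor in $L^\infty$. You would use $\int|A_{j,N}(x-y)-A_{j,N}(x)|\,dx\leq|y|\,\|\nabla A_{j,N}\|_{L^1}$ and $\|\nabla A_{0,N}\|_{L^1}\lesssim_Q 2^{-QN}$. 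Either repair is short, after which the argument closes.

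Separately, the product of powers you write equals $2^{j(1+\frac{nL}{K}-n)}$, because it drops the factor $2^{jn}$ carried by $A_{j,N}$. Rather than appealing to bookkeeping, rescale to $j=0$ as the paper does; the estimate is exactly scale-invariant.
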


\begin{proof}
	By scaling invariance, it suffices to treat the case $j=0$. Employing the function $H(a,b)$ given in \eqref{function:def}, define
	\begin{equation*}
		\begin{aligned}
			\mathcal{J}_N&=
			H\Big(B_{0,N}*g(x),
			\big\|2^{-i\frac{nL}{MK}}B_{i,N+i}*g(x)\big\|_{\ell^M(i<0)}
			\Big),
			\\
			\widetilde{\mathcal{J}}_N&=
			H\Big(B_{0,N}(x-\mu)a_g,
			\big\|2^{-i\frac{nL}{MK}}B_{i,N+i}(x-\mu)a_g\big\|_{\ell^M(i<0)}
			\Big),
		\end{aligned}
	\end{equation*}
	where $a_g=\int g(y)dy$ and $\mu\in\mathbb{R}^n$.
	
	Since $A_{0,N}$ satisfies the cancellation property \eqref{eq:cancellation:2}, and $B_{i,N+i}$, for every $i$, is radially symmetric, observe that
	\begin{equation*}
		\int_{\mathbb R^n}
		A_{0,N}(x-\mu)
		\widetilde{\mathcal{J}}_N
		dx=0.
	\end{equation*}
	Hence, the left-hand side of \eqref{estimate:single_atom}, with $j=0$, is bounded above by
	\begin{equation*}
		\bigg|
		\int_{\mathbb R^n}
		\bigg(A_{0,N}*f(x)
		\mathcal{J}_N
		-A_{0,N}(x-\mu)a_f
		\widetilde{\mathcal{J}}_N
		\bigg)
		dx
		\bigg|
		\leq R_1+R_2,
	\end{equation*}
	where $a_f=\int f(y)dy$, and
	\begin{equation*}
		\begin{aligned}
			R_1&=\big\|A_{0,N}*f(x)-A_{0,N}(x-\mu)a_f\big\|_{L^\infty}
			\|\mathcal{J}_N \|_{L^1},
			\\
			R_2&=
			\big\|A_{0,N}(x-\mu)a_f\big\|_{L^1}
			\|\mathcal{J}_N -\widetilde{\mathcal{J}}_N\|_{L^\infty}.
		\end{aligned}
	\end{equation*}
	
	Using \eqref{technical:1}, we bound $R_1$ and $R_2$, respectively, by
	\begin{equation*}
		\begin{aligned}
			R_1&\leq \left\|\int_{\mathbb{R}^n}
			\big|A_{0,N}(x-y)-A_{0,N}(x-\mu)\big|
			|f(y)|dy\right\|_{L^\infty} \big\||B|*|g|\big\|_{L^{M-1}}^{M-1}
			\big\|2^{-i\frac{nL}{MK}}|B_i|*|g|\big\|_{L^\infty \ell^M(i\leq 0)}^{M(K-1)}
			\\
			&\lesssim \|\nabla A_{0,N}\|_{L^\infty}
			\|B\|_{L^{M-1}}^{M-1}\|B\|_{L^\infty}^{M(K-1)}
			\bigg(\int_{\mathbb{R}^n}|f(y)||y-\mu|dy\bigg)
			\|g\|_{L^1}^L
		\end{aligned}
	\end{equation*}
	and, using \eqref{function:estimate},
	\begin{equation*}
		\begin{aligned}
			R_2&\lesssim\|A_{0,N}\|_{L^1}\|f\|_{L^1}
			\left\|2^{-i\frac{nL}{MK}}
			\int_{\mathbb{R}^n}
			\big|B_{i,N+i}(x-y)-B_{i,N+i}(x-\mu)\big|
			|g(y)|dy
			\right\|_{L^\infty \ell^M(i\leq 0)}
			\\
			&\quad\times\big\|2^{-i\frac{nL}{MK}}B_i\big\|_{L^\infty \ell^M(i\leq 0)}^{L-1}\|g\|_{L^1}^{L-1}
			\\
			&\lesssim \|A_{0,N}\|_{L^1}
			\big\|2^{-i\frac{nL}{MK}}\nabla B_{i,N+i}\big\|_{\ell^M(i\leq 0)L^\infty}
			\|B\|_{L^\infty}^{L-1}
			\|f\|_{L^1}\|g\|_{L^1}^{L-1}
			\bigg(\int_{\mathbb{R}^n}|g(y)||y-\mu|dy\bigg).
		\end{aligned}
	\end{equation*}
	Note that
	\begin{equation*}
		\big\|2^{-i\frac{nL}{MK}}\nabla B_{i,N+i}\big\|_{\ell^M(i\leq 0)L^\infty}
		\lesssim \big\|2^{i(1+\frac{n}{MK})}\big\|_{\ell^M(i\leq 0)}\|\nabla B\|_{L^\infty}
		+\big\|2^{i\frac{n}{MK}}\big\|_{\ell^M(i\leq 0)}\|B\|_{L^\infty}
	\end{equation*}
	remains bounded uniformly in $N$.
	
	Finally, for any fixed $\alpha>0$, use that $A$ decays rapidly to majorize the $L^\infty$ norm of $\nabla A_{0,N}$ and the $L^1$ norm of $A_{0,N}$ by a constant multiple of $2^{-\alpha N}$, and reach the conclusion of the proof.
\end{proof}

The preceding estimate can be localized to control concentration on an infinite array of atoms near a lattice. This step is analogous to Theorem~3.4 of \cite{stolyarov:2024}.

For this purpose, consider the grid of dyadic cubes
\begin{equation*}
	Q_{k,a}=\prod_{i=1}^n \big[2^{1-k}a_i,2^{1-k}(a_i+1)\big],
	\qquad k\in\mathbb{Z},
	\qquad a=(a_1,\ldots,a_n)\in\mathbb{Z}^n.
\end{equation*}
Take any cube $Q\subset \mathbb{R}^n$ with side length $l(Q)$. Dilating $Q$ about its center by a factor $\lambda>0$ gives a cube with side length $\lambda l(Q)$, denoted by $\lambda Q$.

\begin{lem}[Concentration on an array of atoms]\label{lem:concentration_lattice}
	Suppose the assumptions of Proposition~\ref{prop:nonlinear_cancellation} are satisfied. Then
	\begin{equation}\label{estimate:lattice_atom}
		\begin{aligned}
			&\bigg|
			\int_{\mathbb R^n}
			(A_{j,N}*f)(B_{j,N}*g)|B_{j,N}*g|^{M-2}
			\bigg(\sum_{i\leq j}2^{-i\frac{nL}K}|B_{i,N+i-j}*g|^M\bigg)^{K-1}
			dx
			\bigg|
			\\
			&\lesssim \sum_{a\in\mathbb{Z}^n}2^{j(1+\frac{nL}K)-\alpha N}
			\|(f,g)\|_{L^1(3Q_{j-N,a})}^L
			\inf_{\mu_a\in\mathbb{R}^n}\int_{3Q_{j-N,a}}\big|(f,g)(x)\big||x-\mu_a|dx,
		\end{aligned}
	\end{equation}
	for every $\alpha>0$.
\end{lem}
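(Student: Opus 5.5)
Reduce to the single-atom estimate of Lemma~\ref{lem:concentration} by a partition of unity adapted to the grid $\{Q_{j-N,a}\}_{a\in\mathbb Z^n}$, whose cubes have side $2^{1-j+N}$, comparable to the support radius $2^{N+1-j}$ of all kernels in the integrand. By scaling, take $j=0$. The support properties are the key structural fact: $A_{0,N}$ is supported in $|x|\leq 2^N$, $B_{0,N}$ in $|x|\leq 2^{N+1}$, and $B_{i,N+i}$ (for $i\leq 0$) in $|x|\leq 2^{N+1}$ as well. Hence the whole integrand at $x$ depends on $f,g$ only through $B(x,2^{N+1})$.

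First localize the integration variable: write $\int_{\mathbb R^n}=\sum_a\int_{Q_{-N,a}}$. For $x\in Q_{-N,a}$ (side $2^{N+1}$), the ball $B(x,2^{N+1})$ lies in $3Q_{-N,a}$. So on $Q_{-N,a}$ we may replace $f,g$ by $f_a=f\mathds 1_{3Q_{-N,a}}$ and $g_a=g\mathds 1_{3Q_{-N,a}}$ without changing the integrand. We would then like to apply Lemma~\ref{lem:concentration} to $(f_a,g_a)$ with $\mu=\mu_a$, but that lemma integrates over all of $\mathbb R^n$, whereas we now integrate only over $Q_{-N,a}$.

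The main obstacle is that the exact cancellation $\int A_{0,N}(x-\mu)\widetilde{\mathcal J}_N\,dx=0$ requires integration over the full space. I would handle it by running the \emph{proof} of Lemma~\ref{lem:concentration} for $(f_a,g_a)$ rather than its statement. Since $f_a,g_a$ are supported in $3Q_{-N,a}$, the integrand computed from $(f_a,g_a)$ vanishes outside $7Q_{-N,a}$. Its integral over $\mathbb R^n$ then splits as the integral over $Q_{-N,a}$, which is the one we want, plus integrals over $Q_{-N,b}$ for the boundedly many neighbours $b$. This leads to an overlapping scheme, so the bookkeeping would be done differently: apply the lemma's argument directly to the truncated integral $\int_{Q_{-N,a}}$, using the comparison terms $R_1,R_2$ restricted to $Q_{-N,a}$, and add and subtract $\int_{Q_{-N,a}}A_{0,N}(x-\mu_a)a_{f_a}\widetilde{\mathcal J}_N\,dx$. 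The pieces $R_1,R_2$ localize verbatim and give
\[
2^{-\alpha N}\|(f,g)\|_{L^1(3Q_{-N,a})}^{L}\int_{3Q_{-N,a}}|(f,g)||x-\mu_a|\,dx.
\]
The leftover model term $\int_{Q_{-N,a}}A_{0,N}(x-\mu_a)\widetilde{\mathcal J}_N\,dx$ does not vanish, so its contribution must be treated separately.

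To dispose of the model terms, note that $\sum_a$ of them equals $\int_{\mathbb R^n}$ of a sum of translated radially symmetric model integrands, which is not zero either. The alternative I would actually pursue is a smooth partition of unity $\sum_a\psi_a(x)=1$ with $\psi_a$ supported in $2Q_{-N,a}$, combined with the decomposition $f=\sum_a f\mathds 1_{Q_{-N,a}}$ in the \emph{input} variable. This uses multilinearity in $f$, together with the difference estimate \eqref{function:estimate} to pass from $g$ to $g_a$ on the support of $A_{0,N}*(f\mathds 1_{Q_{-N,a}})$, which is contained in $3Q_{-N,a}$. Then $\int_{\mathbb R^n}(A_{0,N}*f\mathds 1_{Q_{-N,a}})\,\mathcal J_N[g_a]\,dx$ is a full-space integral to which Lemma~\ref{lem:concentration} applies directly. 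The replacement $g\to g_a$ is exact there, because for $x\in 3Q_{-N,a}$ the kernels $B_{i,N+i}$ see only $5Q_{-N,a}$. So I would first enlarge the cubes, using $5Q$ in place of $3Q$, and then recover the stated $3Q$ by refining the grid to cubes of side $2^{N-1}$ at the cost of a constant factor. Summing the resulting bounds over $a$, and using $\|f\mathds 1_{Q}\|\leq\|(f,g)\|_{L^1(3Q)}$ and $2^{-\alpha N}$ decay for every $\alpha$, gives \eqref{estimate:lattice_atom}.
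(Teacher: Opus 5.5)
You do not prove the lemma as stated. Your working argument is correct as far as it goes, and it is simpler than the paper's route: you expand $f=\sum_a f\mathds 1_{Q_{-N,a}}$ using the linearity of $f\mapsto A_{0,N}*f$, replace $g$ \emph{exactly} by $g\mathds 1_{5Q_{-N,a}}$ on the support of $A_{0,N}*(f\mathds 1_{Q_{-N,a}})$, and apply Lemma~\ref{lem:concentration} to each resulting full-space integral. The partition of unity $\psi_a$ and the appeal to \eqref{function:estimate} are not needed. However, this proves \eqref{estimate:lattice_atom} only with $5Q_{j-N,a}$ (in fact $4Q_{j-N,a}$) in place of $3Q_{j-N,a}$.

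Your last step, recovering $3Q$ by refining the grid at the cost of a constant, fails. At $j=0$, $A_{0,N}$ is supported in $\{|x|\leq 2^N\}$ and the $B_{i,N+i}$ in $\{|x|\leq 2^{N+1}\}$. So for any cube $Q'$ of side $s$, the piece $A_{0,N}*(f\mathds 1_{Q'})$ needs $g$ on $Q'+\{|x|\leq 3\cdot 2^N\}$. That set has width $s+6\cdot 2^N$ in every coordinate direction, while $3Q_{-N,a}$ has side exactly $6\cdot 2^N$. The loss comes from adding the two kernel radii, not from the grid size, so no refinement removes it. A constant factor cannot remove it either, because the $5Q$ right-hand side is not controlled by a multiple of the stated one. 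Take tiny bumps (for both $f$ and $g$) at two points at distance slightly more than $6\cdot 2^N$ along a coordinate axis. They never meet a common $3Q_{-N,a}$, so the stated right-hand side tends to zero as the bumps shrink. Yet they lie in a common $5Q_{-N,a}$, where the first-moment factor stays of order $2^N$.

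To reach $3Q$ you must localize $f$ and $g$ \emph{simultaneously} at the locality radius of the whole integrand, which is what the paper does. At $j=1$, the integrand $\mathcal I(F)$, with $F=(f,g)$, depends at $x$ only on $F$ in $\{|y-x|\leq 2^N\}$. For slabs $D_k=\{2^Nk\leq x\cdot e\leq 2^N(k+1)\}$ one has the exact telescoping identity
\[
\mathcal I(F)=\sum_{k\in\mathbb Z}\big[\mathcal I(F\mathds 1_{D_{k-1}\cup D_k\cup D_{k+1}})-\mathcal I(F\mathds 1_{D_{k-1}\cup D_k})\big].
\]
Iterating it in the $n$ coordinate directions writes $\mathcal I(F)$ as a signed, boundedly overlapping sum of terms $\mathcal I(F\mathds 1_{R_i})$. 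The boxes $R_i$ have sides $2^{N+1}$ or $3\cdot 2^N$ and vertices in $2^N\mathbb Z^n$, so each lies in some $3Q_{1-N,a}$. Lemma~\ref{lem:concentration} then applies to each full-space integral $\int_{\mathbb R^n}\mathcal I(F\mathds 1_{R_i})\,dx$. This device needs no linearity in $f$, and it is what yields the dilation factor $3$.

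Your $5Q$ version is nonetheless all that Section~\ref{sec:end_proof} uses. There only two facts matter: the cubes have diameter $\lesssim 2^{N-j}$, and they have bounded overlap. So your argument would still give Proposition~\ref{prop:nonlinear_cancellation}; it just does not prove the lemma as stated.
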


\begin{proof}
	As before, by scaling invariance, we only treat the case $j=1$. Set $F=(f,g)$ and denote the integrand in the left-hand side of \eqref{estimate:lattice_atom}, with $j=1$, by $\mathcal{I}(F)$. For any subset $U\subset \mathbb{R}^n$ denote
	\begin{equation*}
		F_U=F\mathds{1}_U.
	\end{equation*}
	For any direction $e\in\mathbb{S}^{n-1}$, consider the partition
	\begin{equation*}
		\mathbb{R}^n=\cup_{k\in\mathbb{Z}}D_k,
		\qquad D_k=\{2^Nk \leq x\cdot e\leq 2^N(k+1) \}.
	\end{equation*}
	
	The kernels $A_{1,N}$ and $B_{i,N+i-1}$, for every $i\leq 1$, are supported in a ball of radius $2^N$ centered at the origin. Thus,
	\begin{equation*}
		\mathcal{I}(F)=\sum_{k\in\mathbb{Z}}\mathds{1}_{D_k}\mathcal{I}(F)=\sum_{k\in\mathbb{Z}}\mathds{1}_{D_k}\mathcal{I}(F_{D_{k-1}\cup D_k\cup D_{k+1}}).
	\end{equation*}
	Observe that $\mathcal{I}(F_{D_{k-1}\cup D_k\cup D_{k+1}})$ is supported on $\cup_{i=k-2}^{k+2}D_i$. Using that
	\begin{equation*}
		\begin{aligned}
			\mathcal{I}(F_{D_{k-1}\cup D_k\cup D_{k+1}})&=\mathcal{I}(F_{D_{k-1}\cup D_k}),
			&\text{on }D_{k-2}\cup D_{k-1},
			\\
			\mathcal{I}(F_{D_{k-1}\cup D_k\cup D_{k+1}})&=\mathcal{I}(F_{D_k\cup D_{k+1}}),
			&\text{on }D_{k+1}\cup D_{k+2},
		\end{aligned}
	\end{equation*}
	it follows that $\mathcal{I}(F)$ can be decomposed into the sum over $k$ of
	\begin{equation*}
		\mathcal{I}(F_{D_{k-1}\cup D_k\cup D_{k+1}})
		-\mathds{1}_{D_{k-2}\cup D_{k-1}}\mathcal{I}(F_{D_{k-1}\cup D_k})
		-\mathds{1}_{D_{k+1}\cup D_{k+2}}\mathcal{I}(F_{D_k\cup D_{k+1}}).
	\end{equation*}
	Hence, summing over $k$ and shifting the summation index by one in the last term,
	\begin{equation*}
		\begin{aligned}
			\mathcal{I}(F)&=
			\sum_{k\in\mathbb{Z}}\big[\mathcal{I}(F_{D_{k-1}\cup D_k\cup D_{k+1}})
			-\mathds{1}_{D_{k-2}\cup D_{k-1}}\mathcal{I}(F_{D_{k-1}\cup D_k})
			-\mathds{1}_{D_{k}\cup D_{k+1}}\mathcal{I}(F_{D_{k-1}\cup D_k})\big]
			\\
			&=
			\sum_{k\in\mathbb{Z}}\big[\mathcal{I}(F_{D_{k-1}\cup D_k\cup D_{k+1}})
			-\mathcal{I}(F_{D_{k-1}\cup D_k})\big].
		\end{aligned}
	\end{equation*}
	
	This procedure can now be iterated $n$ times in each direction of the canonical basis of $\mathbb{R}^n$, which leads to a decomposition
	\begin{equation*}
		\mathcal{I}(F)=\sum_{i\in\mathbb{N}}\varepsilon_i\mathcal{I}(F_{R_i}),
	\end{equation*}
	where $\varepsilon_i\in\{-1,1\}$ and each $R_i$ is a rectangular parallelepiped aligned with the coordinate axes. Each edge of $R_i$ has length $3\cdot 2^N$ or $2^{N+1}$, and each vertex lies on the lattice $2^N\mathbb{Z}^n$. Thus, each $R_i$ is contained inside a cube $3Q_{1-N,a}$, with $a\in\mathbb{Z}^n$. Finally, observing that the family $\{R_i\}_{i\in\mathbb{N}}$ has finite overlap and applying Lemma~\ref{lem:concentration} to each $|\int_{\mathbb{R}^n}\mathcal{I}(F_{R_i})dx|$ concludes the proof.
\end{proof}

\section{Proof of Proposition~\ref{prop:nonlinear_cancellation}}
\label{sec:end_proof}

Using a concentration-control argument analogous to the one in the previous section, Stolyarov showed in \cite{stolyarov:2024} that Maz'ya's $\Phi$-inequalities have a simple proof in the $L^2$ setting. We adopt a similar approach to prove Proposition~\ref{prop:nonlinear_cancellation}.

\begin{proof}[Proof of Proposition~\ref{prop:nonlinear_cancellation}]
	By Lemma~\ref{lem:compact:support}, it remains to prove \eqref{estimate:nonlinear:compact}. Take $\alpha>1$, write $F=(f,g)$, and apply Lemma~\ref{lem:concentration_lattice} to bound the left-hand side of \eqref{estimate:nonlinear:compact} by
	\begin{equation*}
		\begin{aligned}
			&\|F\|_{L^1(\mathbb{R}^n)}^{L-1} \sum_{j\in\mathbb{Z}}\sum_{a\in\mathbb{Z}^n}2^{j-\alpha N}
			\|F\|_{L^1(3Q_{j-N,a})}
			\inf_{\mu_a\in\mathbb{R}^n}\int_{3Q_{j-N,a}}\big|F(x)\big||x-\mu_a|dx
			\\
			&\quad\leq 2^{(1-\alpha) N}
			\|F\|_{L^1(\mathbb{R}^n)}^{L-1} \sum_{j\in\mathbb{Z}}\sum_{a\in\mathbb{Z}^n}2^j
			\int_{3Q_{j,a}\times 3Q_{j,a}}\big|F(x)\big|\big|F(y)\big||x-y|dxdy.
		\end{aligned}
	\end{equation*}
	Since the diameter of $3Q_{j,a}$ is $\sqrt n\cdot 3\cdot 2^{1-j}$, the last line is controlled by
	\begin{equation*}
		2^{(1-\alpha) N}
		\|F\|_{L^1(\mathbb{R}^n)}^{L-1} \sum_{j\in\mathbb{Z}}2^j
		\int_{\{2^j|x-y|\lesssim 1\}}\big|F(x)\big|\big|F(y)\big||x-y|dxdy.
	\end{equation*}
	Thus, observing that
	\begin{equation*}
		\sum_{j\in\mathbb{Z}}2^j|x-y|\mathds{1}_{\{2^j|x-y|\lesssim 1\}}\lesssim 1,
	\end{equation*}
	we have obtained that the left-hand side of \eqref{estimate:nonlinear:compact} is bounded above by
	\begin{equation*}
		2^{(1-\alpha)N}\big(\|f\|_{L^1}^{L+1}+\|g\|_{L^1}^{L+1}\big).
	\end{equation*}
	Now, take $\lambda>0$ and replace $(f,g)$ by $(\lambda^{-L}f,\lambda g)$. Minimizing the right-hand side of the resulting estimate with respect to $\lambda$ proves \eqref{estimate:nonlinear:compact} with $c_N=2^{(1-\alpha)N}$, completing the proof.
\end{proof}

\paragraph{\bf Statement on human intelligence}

The mathematical research in this article is entirely human and AI-free.

\paragraph{\bf Note on a related preprint}

The preprint arXiv:2608.07845 presents a concurrent approach to the Bourgain--Brezis problem. The circumstances of its publication compel me to document the following points:
\begin{itemize}
	\item That preprint was not a source for any result or argument in this article.
	\item It was submitted to arXiv on August 8, 2026, four days after \cite{arsenio:2026} was submitted there on August 4, 2026.
	\item It neither cites nor acknowledges \cite{arsenio:2026}.
	\item It exhibits substantial and specific conceptual, analytical, expository, and structural similarities with \cite{arsenio:2026}, beyond the overlap in the problem addressed.
	\item Its authors disclose that the TARS system assisted their mathematical derivations through exploratory reasoning.
\end{itemize}
Taken together, these circumstances raise a serious concern that an AI system may have drawn on my earlier preprint without proper attribution. The public record does not establish whether this occurred. I do not state it as fact or allege deliberate misconduct by any author.

\bibliographystyle{alpha}
\bibliography{hodge}

\end{document}